\newcommand{\leqnos}{\tagsleft@true\let\veqno\@@leqno}
\newcommand{\reqnos}{\tagsleft@false\let\veqno\@@eqno}
\newtheorem{thm}{}[section]
\newtheorem{theorem}[thm]{Theorem}
\newtheorem{corollary}[thm]{Corollary}
\newtheorem{lemma}[thm]{Lemma}
\newtheorem{proposition}[thm]{Proposition}
\theoremstyle{definition}
\newtheorem{definition}[thm]{Definition}
\theoremstyle{remark}
\newtheorem{remark}[thm]{Remark}
\numberwithin{equation}{section}
\newcommand{\SL}{\ensuremath{\mathscr{L}}}
\newcommand{\Ts}{\ensuremath{\mathcal{T}}}
\newcommand{\FF}{\ensuremath{\mathbb{F}}}
\newcommand{\NN}{\ensuremath{\mathbb{N}}}
\newcommand{\xx}{\ensuremath{\bm{x}}}
\newcommand{\XX}{\ensuremath{\mathbb{X}}}
\newcommand{\XB}{\ensuremath{\mathcal{X}}}
\newcommand{\YY}{\ensuremath{\mathbb{Y}}}
\newcommand{\EE}{\ensuremath{\mathcal{E}}}
\newcommand{\Ind}{\ensuremath{\mathbbm{1}}}
\newcommand{\Id}{\ensuremath{\mathrm{Id}}}
\newcommand{\GG}{\ensuremath{\mathcal{G}}}
\newcommand{\UU}{\ensuremath{\mathcal{R}}}
\newcommand{\BB}{\ensuremath{\mathcal{B}}}
\newcommand{\ww}{\ensuremath{\bm{w}}}
\newcommand{\ff}{\ensuremath{\bm{f}}}
\newcommand{\LO}{\ensuremath{\mathcal{L}}}
\newcommand{\Cu}{\ensuremath{\mathcal{Q}}}
\newcommand{\kk}{\ensuremath{\bm{k}}}
\newcommand{\leb}{\ensuremath{\bm{L}}}
\newcommand{\Ct}{\ensuremath{\bm{C}}}
\newcommand{\dem}{\ensuremath{\bm{\mu}}}
\newcommand{\unc}{\ensuremath{\bm{k}}}
\DeclareMathOperator{\sgn}{sign}
\DeclareMathOperator{\spn}{span}
\subjclass[2010]{41A65, 41A46, 41A17, 46B15, 46B45}
\keywords{Non-linear approximation, quasi-greedy basis, GT-space, Thresholding greedy algorithm}
\begin{document}

\title[Weak forms of unconditionality in greedy approximation]{Weaker forms of unconditionality of bases in greedy approximation}

\author[Albiac]{Fernando Albiac}
\address{Department of Mathematics, Statistics, and Computer Sciencies--InaMat2 \\
Universidad P\'ublica de Navarra\\
Campus de Arrosad\'{i}a\\
Pamplona\\
31006 Spain}
\email{fernando.albiac@unavarra.es}

\author[Ansorena]{Jos\'e L. Ansorena}
\address{Department of Mathematics and Computer Sciences\\
Universidad de La Rioja\\
Logro\~no\\
26004 Spain}
\email{joseluis.ansorena@unirioja.es}

\author[Berasategui]{Miguel Berasategui}
\address{Miguel Berasategui\\
IMAS - UBA - CONICET - Pab I, Facultad de Ciencias Exactas y Naturales \\ Universidad de Buenos Aires \\ (1428), Buenos Aires, Argentina}
\email{mberasategui@dm.uba.ar}

\author[Bern\'a]{Pablo M. Bern\'a}
\address{Pablo M. Bern\'a\\
Departamento de Matem\'atica Aplicada y Estad\'istica, Facultad de Ciencias Econ\'omicas y Empresariales, Universidad San Pablo-CEU, CEU Universities\\ Madrid, 28003 Spain.}
\email{pablo.bernalarrosa@ceu.es}

\author[Lassalle]{Silvia Lassalle}
\address{Silvia Lassalle\\
Departamento de Matem\'atica \\
Universidad de San Andr\'es, Vito Duma 284\\
(1644) Victoria, Buenos Aires, Argentina and \\
IMAS - CONICET
} \email{slassalle@udesa.edu.ar}

\begin{abstract}
In this paper we study a new class of bases, weaker than quasi-greedy bases, which retain their unconditionality properties and can provide the same optimality for the thresholding greedy algorithm. We measure how far these bases are from being unconditional and use this concept to give a new characterization of nearly unconditional bases.
\end{abstract}

\thanks{F. Albiac acknowledges the support of the Spanish Ministry for Science and Innovation under Grant PID2019-107701GB-I00 for \emph{Operators, lattices, and structure of Banach spaces}. F. Albiac and J.~L. Ansorena acknowledge the support of the Spanish Ministry for Science, Innovation, and Universities under Grant PGC2018-095366-B-I00 for \emph{An\'alisis Vectorial, Multilineal y Aproximaci\'on.} M. Berasategui and S. Lassalle were supported by ANPCyT PICT-2018-04104. P. Bern\'a by Grants PID2019-105599GB-I00 (Agencia Estatal de Investigaci\'on, Spain) and 20906/PI/18 from Fundaci\'on S\'eneca (Regi\'on de Murcia, Spain). S. Lassalle was also supported in part by CONICET PIP 0483 and PAI UdeSA 2020-2021. F. Albiac, J.~L. Ansorena and P.~M. Bern\'a would like to thank the Erwin Schr\"odinger International Institute for Mathematics and Physics, Vienna, for support and hospitality during the programme \emph{Applied Functional Analysis and High-Dimensional Approximation}, held in the Spring of 2021, where work on this paper was undertaken.}

\maketitle

\section{Introduction}\noindent
From the abstract perspective of Banach spaces, the theory of (nonlinear) greedy approximation using bases sprang from the seminal characterization of greedy bases by Konyagin and Temlyakov in 1999 as those bases that are simultaneously unconditional and democratic \cite{KoTe1999}. These two properties are, a priori, independent of each other and we find examples of unconditional bases which are not democratic and the other way around already in the very early stages of the theory (see, e.g., \cite{AlbiacKalton2016}*{Example 10.4.4}). However, the geometry of some spaces $\XX$ can make these properties intertwine, to the extent that the unconditional semi-normalized bases in $\XX$ end up being democratic (hence greedy). This is the case of unconditional bases in Hilbert spaces, and also in the spaces $\ell_{1}$ and $c_{0}$ for instance (see \cite{DSBT2012}*{Theorem 4.1}, \cite{Woj2000}*{Theorem 3} and \cite{DKK2003}*{Corollary 8.6}).

Unconditional bases are well suited to implement the Thresholding Greedy Algorithm (TGA for short) even when they are not democratic. As it happens, the unconditionality assumption can be relaxed and still obtain bases for which the TGA behaves well enough. Wojtaszczyk \cite{Woj2000} gave a precise formulation of this fact by proving that quasi-greedy bases, introduced by Konyagin and Telmyakov also in \cite{KoTe1999}, are precisely those bases for which the TGA converges. In light of this result one could argue that being quasi-greedy is the minimal requirement we must impose to a basis in order for the TGA to be a reasonable method of approximation.

However, in studying the optimality of the TGA, other forms of unconditionality have emerged which despite being weaker than quasi-greedy bases still preserve essential properties in greedy approximation. Delving deeper into these properties is of interest both from a theoretical and a practical viewpoint. On one hand, isolating the specific features of those bases makes the theory progress; on the other hand, from a more applied approach, working with these properties allows us to obtain sharper estimates for the efficiency of the TGA (see \cite{AAB2021}).

This note is motivated by the following questions, which we shall address in sections to come:
What is the relation of these new breed of bases with the already-existing ones in the theory?
Are these types of bases all-prevailing or a rare find in Banach spaces? What is the efficiency of the greedy algorithm relative to these bases? Before going into the matter we will gather the most relevant terminology in the next section.

\section{Preliminaries on terminology and background}\noindent We employ the standard notation and terminology commonly used in Banach space theory and approximation theory, as the reader will find, e.g., in the monograph \cite{AlbiacKalton2016} or the recent article \cite{AABW2021}.

Let $\XX$ be an infinite-dimensional separable Banach space (or, more generally, a quasi-Banach space) over the real or complex field $\FF$. Throughout this paper by a \emph{basis} of $\XX$ we mean a norm-bounded sequence $\XB=(\xx_n)_{n=1}^\infty$ that generates the entire space, in the sense that
\[
\overline{\spn}(\xx_n \colon n\in\NN)=\XX,
\]
and for which there is a (unique) norm-bounded sequence $\XB^*=(\xx_{n}^*)_{n=1}^\infty$ in the dual space $\XX^{\ast}$ such that $(\xx_{n}, \xx_{n}^{\ast})_{n=1}^{\infty}$ is a biorthogonal system. We will refer to the basic sequence $\XB^*$ in $\XX^*$ as to the \emph{dual basis} of $\XB$.

Given $A\subseteq\NN$ finite, $S_A=S_{A}[\XB,\XX]\colon\XX\to \XX$ will denote the \emph{coordinate projection} on $A$ with respect to the basis $\XB$,
\[
S_A(f)=\sum_{n\in A}\xx_{n}^{\ast}(f)\xx_{n}, \quad f\in\XX.
\]
For $f\in \XX$ and $m\in \NN$ we define
\[
\GG_{m}(f)=S_{A_m(f)}(f),
\]
where $A=A_m(f)\subseteq\NN$ is a \emph{greedy set} of $f$ of cardinality $m$, i.e., $|\xx_{n}^{\ast}(f)|\ge| \xx_{k}^{\ast}(f)|$ whenever $n\in A$ and $k\not\in A$. The set $A$ depends on $f$ and $m$, and may not be unique; if this happens we take any such set. Thus, the \emph{greedy operator} $\GG_{m}$ is well-defined, but is not linear nor continuous. The basis
$\XB=(\xx_{n})_{n=1}^{\infty}$ is said to be \textit{quasi-greedy} provided that there is a constant $C\ge 1$ so that for every $f\in \XX$ and for every $m\in \NN$ we have
\[
\Vert \GG_{m}(f)\Vert \le C\Vert f\Vert.
\]
Equivalently, by \cite{Woj2000}*{Theorem 1} (see also \cite{AABW2021}*{Theorem 4.1}), $\XB$ is quasi-greedy if and only if
\[
\lim_{m\to\infty} \GG_{m}(f)=f\; \text{for each}\; f\in \XX,
\]
i.e., the TGA $(\GG_{m})_{m=1}^{\infty}$ always converges.
Of course, unconditional bases are quasi-greedy, but the converse does not hold in general. Konyagin and Telmyakov provided in \cite{KoTe1999} the first examples of \emph{conditional} (i.e., not unconditional) quasi-greedy bases.

We also recall that a basis $\XB=(\xx_n)_{n=1}^\infty$ is called \emph{democratic} if there is a constant $\Delta\ge 1$ such that for any finite subsets $A$ and $B$ of $\NN$ with $|A|\le |B|$, we have
\[
\left\Vert \sum_{k\in A}\xx_k \right\Vert\le \Delta \left\Vert \sum_{k\in B}\xx_k \right\Vert.
\]
The \emph{upper democracy function}, also called the \emph{fundamental function}, of $\XB$ is then defined by
\[
\varphi(m)=\varphi[\XX, \XB](m)=\sup\limits_{|A|\le m}\left\Vert \sum_{k\in A}\xx_k \right\Vert.
\]
Given $A\subseteq \NN$, we will use $\EE_A$ to denote the set consisting of all families $\varepsilon=(\varepsilon_n)_{n\in A}$ in $\FF$ with $|\varepsilon_n|=1$ for all $n\in A$, and will put
\[
\Ind_{\varepsilon,A}=\Ind_{\varepsilon,A}[\XB,\XX]=\sum_{n\in A} \varepsilon_n\, \xx_n.
\]
If the basis and the space are clear from the context we simply put $\Ind_{\varepsilon,A}$.

In order to better understand how certain greedy-like properties dualize, Dilworth et al.\ introduced in \cite{DKKT2003} a strengthend form of democracy. They called a basis $\XB$ \emph{bidemocratic} provided there is a constant $C$ such that
\[
\varphi[\XB,\XX](m) \, \varphi[\XB^*,\XX^*](m)\le C m, \quad m\in\NN.
\]
Bidemocratic bases are, in particular democratic \cite{DKKT2003}*{Proposition 4.2}.

Another property linked to democracy which will be of interest for us is squeeze symmetry. We say that a basis $\XB$ is \emph{squeeze symmetric} if there are symmetric bases $\BB_1$ and $\BB_2$ in respective quasi-Banach spaces $\YY_1$ and $\YY_2$ such that $\BB_1$ dominates $\XB$, $\XB$ dominates $\BB_2$, and
\[
\varphi[\BB_1,\YY_1](m) \lesssim \varphi[\BB_2,\YY_2](m), \quad m\in\NN.
\]
For instance if $\XX$ is locally convex (i.e., $\XX$ is a Banach space) and $\XB$ dominates the unit vector system of the space weak-$\ell_1$, then $\XB$ is squeeze symmetric. Squeeze symmetry is an intermediate property between democracy and bidemocracy and is considered by some authors as a condition which ensures in a certain sense the optimality of the compression algorithms with respect to the basis (see \cite{Donoho1993}). We refer the reader to \cite{AABW2021}*{\S9} for more details.

Working with the TGA leads naturally to consider derived forms of unconditionality which are still of interest in greedy approximation using bases. For instance, in the early days of the theory Wojtaszczyk proved that quasi-greedy bases in quasi-Banach spaces are suppression unconditional for constant coefficients, or SUCC for short (\cite{Woj2000}*{Proposition 2}). This means that there is a constant $C$ such that
\begin{equation}\label{succ}
\Vert \Ind_{\varepsilon,B}\Vert \le C \Vert \Ind_{\varepsilon,A}\Vert\end{equation} for every $A\subseteq\NN$ finite, every $\varepsilon\in\EE_A$, and every $B\subseteq A$. The smallest constant $C\ge 1$ so that \eqref{succ} holds will be called the SUCC constant of $\XB$.
As the subject evolved, new attributes of bases as well as new nonlinear operators associated with them were identified, and the ingredients that played a key role in the foundational results of the theory were given the status they deserved. We single out a trait of bases that will be the focus of our attention in what follows.

Let $\XB=(\xx_{n})_{n=1}^{\infty}$ be a basis of a quasi-Banach space $\XX$. For $f\in \XX$ and $A\subseteq\NN$ finite, put
\[
\UU(f,A) = \min_{n\in A} |\xx_n^*(f)| \sum_{n\in A} \sgn(\xx_n^*(f)) \, \xx_n.
\]
For $m\in\NN\cup\{0\}$, the $m$\emph{th-restricted truncation operator} $\UU_m\colon \XX \to \XX$ is defined as
\[
\UU_m(f)=\UU(f,A_m(f)), \quad f\in\XX.
\]
The truncation operators
\[
\Ts_m=\UU_m+\Id_{\XX}-\GG_m,\quad m\in \NN,
\]
were introduced in \cite{DKK2003} to give relief to the core feature of quasi-greedy bases in the proof that quasi-greedy democratic bases are almost greedy (see \cite{DKKT2003}*{Theorem 3.3}). Further developments in the theory have shown that the `restricted' component of the operators $(\Ts_m)_{m=1}^{\infty}$ yields more accuracy when the TGA is implemented for non-quasi-greedy bases. Thus, the uniform boundedness of the restricted truncation operators was singled out as a property of interest by itself in greedy approximation (see \cite{BDKOW2019}*{Definition 3.12}).

The authors of \cite{AABW2021} used the uniform boundedness of the restricted truncation operators to extend to the nonlocally convex setting the characterization of almost greedy bases. It is worth pointing out that this extension is far from trivial (see \cite{AABW2021}*{Theorem 6.3}). In this paper, we will call bases with
\[
\sup_{m\in\NN} \Vert \UU_m\Vert<\infty
\]
\emph{truncation quasi-greedy}.

In the same way as unconditionality is an ingredient of greediness and quasi-greediness is an ingredient of almost greediness, truncation quasi-greediness is an ingredient of squeeze symmetry. In fact, a basis is squeeze symmetric if and only if it is truncation quasi-greedy and democratic (see \cite{AABW2021}*{Lemma 9.3 and Theorem 9.12}).

If $\XB$ is quasi-greedy, then $\XB$ is truncation quasi-greedy and the restricted truncation operators converge, i.e., $\lim_{m\to \infty} \UU_m(f)=0$ for all $f\in \XX$ (see \cite{DKKT2003}*{Lemma 2.2} and \cite{AABW2021}*{Theorem 4.13}). In practice, in most situations the only property of quasi-greedy bases that one needs is that they are truncation quasi-greedy (see e.g.\ \cite{AABBL2021}*{Theorem 2.4} or \cite{AABW2021}*{Proposition 10.17}). However, these two concepts are not the same. The first example that illustrates this dissimilitude can be found in the proof of \cite{BBG2017}*{Proposition 5.6}, where the authors constructed a basis that dominates the unit vector system of weak-$\ell_1$ hence it is truncation quasi-greedy, but it is not quasi-greedy. In this regard, it must be mentioned that it was recently proved that bidemocratic bases need not be quasi-greedy \cite{AABBL2021}*{Corollary 3.7}. Since bidemocratic bases are truncation quasi-greedy, this result yields, in particular, the existence of truncation quasi-greedy bases that are not quasi-greedy.

\section{How far are truncation quasi-greedy bases from being unconditional?}\label{sect:Ntruncation quasi-greedy}\noindent
In spite of the fact that truncation quasi-greedy bases need not be quasi-greedy, they still enjoy most of the nice unconditionality-like properties of quasi-greedy bases. For instance, they are quasi-greedy for large coefficients, suppression unconditional for constant coefficients, and lattice partially unconditional. See \cite{AABW2021}*{Sections 3 and 4} for the precise definitions and the proofs of these relations.

This section is devoted to providing an answer to the question in the title, both from a qualitative and a quantitative point of view. The qualitative approach will consist in identifying truncation quasi-greedy bases with an already existing class of bases in greedy approximation, namely, nearly unconditional bases. To tackle the quantitative approach, we will pay attention to the growth of the unconditionality constants associated with the basis. For broader applicability of our results we work in the general framework of quasi-Banach spaces.

Let us recall a notion introduced by Elton in \cite{Elton1978} and imported to greedy approximation theory by Dilworth et al.\ in \cite{DKK2003}. Given a quasi-Banach space $\XX$ with a basis $\XB=(\xx_n)_{n=1}^\infty$ we consider the set $\Cu$ of vectors in $\XX$ whose coefficient sequences (relative to the basis $\XB$) belong to the unit ball of $\ell_{\infty}$,
\[
\Cu=\mathcal Q[\XB,\XX]=\{f\in \XX \colon \sup_n|\xx_n^*(f)| \le 1\}.
\]
Now, given a threshold number $a\ge 0$ and $f\in \XX$ put
\[
A(a,f):=\{n\in\NN \colon |\xx_n^*(f)|\ge a\}.
\]
The basis $\XB$ is said to be \emph{nearly unconditional} if for each $a\in(0,1)$ there is a constant $C$ such that for any $f\in \Cu$,
\begin{equation}\label{nearlyuncdef}
\Vert S_A(f)\Vert\le C\Vert f\Vert,
\end{equation}
whenever $A\subseteq A(a,f)$.

Let $\phi\colon(0,1)\to(0,\infty)$ denote the function that maps each $a$ to the the smallest constant $C$ in \eqref{nearlyuncdef}. If the inequality in \eqref{nearlyuncdef} holds only for $A =A(a,f)$, the basis is said to be \emph{thresholding-bounded}, and we denote by $\theta(a)$, $0<a<1$ the smallest constant $C$. Notice that $\phi$ is bounded if and only if $\XB$ is unconditional, and that the function $\theta$ is bounded if and only if the basis is quasi-greedy.

Nearly unconditional bases are thresholding-bounded in the same way that unconditional bases are quasi-greedy. However, while quasi-greedy bases need not be unconditional, thresholding-bounded bases are surprisingly nearly unconditional (see \cite{DKK2003}*{Proposition 4.5}). Our first goal in this section will be to show that when truncation quasi-greedy bases come into play, the corresponding characterization also holds.

If $A=A(a,f)$, we will use the thresholding operators
\[
\GG^{(a)}(f):=S_A(f)\quad \text{and}\quad \UU^{(a)}(f):=\UU_A(f).
\]

\begin{definition}
A basis $\XB$ of a quasi-Banach space $\XX$ will be said to be \emph{nearly truncation quasi-greedy} if for each $a\in(0,1)$ there is a constant $C$ such that
\begin{equation}\label{nearlytrucqg}
\Vert \UU^{(a)}(f)\Vert \le C \Vert f\Vert, \quad f\in \mathcal Q.
\end{equation}
We will denote by $\lambda(a)$ the smallest constant $C$ so that \eqref{nearlytrucqg} holds. The mapping $\lambda\colon(0,1) \to (0,\infty)$ will be called the nearly truncation quasi-greedy function of $\XB$.
\end{definition}

\begin{lemma}\label{lem:Ntruncation quasi-greedy}
Let $\XB$ be a nearly truncation quasi-greedy basis of a quasi-Banach space $\XX$. Then:
\begin{enumerate}[label=(\roman*), leftmargin=*, widest=ii]
\item\label{it:A} Its nearly truncation quasi-greedy function $\lambda$ is non-increasing, and
\item\label{it:B}
$\XB$ is SUCC with constant $\lambda(1^-)$.
\end{enumerate}
\end{lemma}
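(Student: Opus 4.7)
For part (i), the plan is a simple scaling argument. Given $0<a_1\le a_2<1$, set $c=a_1/a_2\in (0,1]$. For any $f\in\Cu$ we have $cf\in\Cu$, and directly from the definitions
\[
A(a_1,cf)=A(a_2,f), \qquad \UU^{(a_1)}(cf)=c\,\UU^{(a_2)}(f).
\]
Applying the definition of $\lambda(a_1)$ to $cf$ and cancelling the common factor $c$ yields $\|\UU^{(a_2)}(f)\|\le \lambda(a_1)\,\|f\|$ for every $f\in\Cu$, so $\lambda(a_2)\le \lambda(a_1)$.

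For part (ii), fix a finite set $A\subseteq \NN$, a family $\varepsilon\in\EE_A$ and a subset $B\subseteq A$ (the cases $B=\emptyset$ and $B=A$ are trivial). The plan is to use a family of test vectors that interpolates between $\Ind_{\varepsilon,A}$ and $\Ind_{\varepsilon,B}$: for $t\in(0,1)$ set
\[
g_t := \Ind_{\varepsilon,B}+t\sum_{n\in A\setminus B}\varepsilon_n\,\xx_n\in\Cu.
\]
For every $a\in(t,1]$ we have $A(a,g_t)=B$, and since $|\varepsilon_n|=1$ gives $\sgn(\varepsilon_n)=\varepsilon_n$, the definition of $\UU^{(a)}$ forces $\UU^{(a)}(g_t)=\Ind_{\varepsilon,B}$. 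Hence
\[
\|\Ind_{\varepsilon,B}\|\le \lambda(a)\,\|g_t\| \quad \text{for every } a\in(t,1).
\]

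It remains to take $t\to 1^-$. By (i), $\lambda$ is non-increasing, so $\lambda(1^-)=\inf_{a<1}\lambda(a)$ and for every $\delta>0$ there exists $a_0<1$ with $\lambda(a)\le \lambda(1^-)+\delta$ for all $a\in(a_0,1)$. Since $g_t\to \Ind_{\varepsilon,A}$ as $t\to 1^-$ in quasi-norm, continuity of the quasi-norm gives $\|g_t\|\to \|\Ind_{\varepsilon,A}\|$; thus for $t$ sufficiently close to $1$ we also have $\|g_t\|\le \|\Ind_{\varepsilon,A}\|+\delta$. Choosing such a $t>a_0$ and any $a\in(t,1)$ produces
\[
\|\Ind_{\varepsilon,B}\|\le (\lambda(1^-)+\delta)(\|\Ind_{\varepsilon,A}\|+\delta),
\]
and letting $\delta\to 0$ gives the SUCC inequality with constant $\lambda(1^-)$. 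The subtle point—and where I expect a careless argument to fail—is precisely this limit step: a naive application of the quasi-triangle inequality to control $\|g_t\|$ in terms of $\|\Ind_{\varepsilon,A}\|$ would introduce the modulus of concavity as a spurious extra factor. Invoking continuity of the quasi-norm as a function (rather than the triangle inequality) is what delivers the sharp constant $\lambda(1^-)$.
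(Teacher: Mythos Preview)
Your proof is correct and follows essentially the same approach as the paper: part~(i) is the identical scaling identity (the paper writes it as $\UU^{(b)}(f)=\tfrac{b}{a}\,\UU^{(a)}(\tfrac{a}{b}f)$), and part~(ii) is precisely the ``standard perturbation technique'' the paper invokes without spelling out. Your care in passing to the limit via continuity of the quasi-norm rather than the quasi-triangle inequality is exactly what secures the sharp constant $\lambda(1^-)$.
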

\begin{proof}If $0<a\le b <1$ and $f\in \Cu$, then $a f/b\in \Cu$, and
\[
\UU^{(b)} (f) = \frac{b}{a} \UU^{(a)} \left( \frac{a}{b} f \right).
\]
This gives \ref{it:A}. To see \ref{it:B}, we note that a standard perturbation technique yields $\Vert \Ind_{\varepsilon,A} \Vert \le \lambda(1^-) \Vert \Ind_{\varepsilon,B} \Vert $ whenever $A\subseteq B$.
\end{proof}

\begin{lemma}\label{lem:NRPLPU}
Suppose that $\XB=(\xx_n)_{n=1}^\infty$ is a nearly truncation quasi-greedy basis of a quasi-Banach space $\XX$. Then there is a constant $C$ depending only on the modulus of concavity of the quasi-norm on $\XX$ such that
\[
\left\Vert \sum_{n\in A} a_n\, \xx_n\right\Vert \le C \lambda(1^-)\lambda(a) \Vert f\Vert, \quad f\in \Cu,
\]
for all $ 0<a<1$ and all $A\subseteq\NN$ such that $|a_n|\le a \le |\xx_n^*(f)|$ for all $n\in A$.
\end{lemma}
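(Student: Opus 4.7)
The plan is to combine the nearly truncation quasi-greedy estimate at threshold $a$ with the SUCC constant $\lambda(1^-)$ from Lemma~\ref{lem:Ntruncation quasi-greedy}(ii) to control sign-indicator vectors on $A$, and then to convert that control into a bound on the arbitrary-coefficient sum.

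I would set $\varepsilon_n = \sgn(\xx_n^*(f))$ for $n \in A(a,f)$ and $\mu = \min_{n \in A(a,f)} |\xx_n^*(f)|$; since $A \subseteq A(a,f)$ and $\mu \ge a$, the identity $\UU^{(a)}(f) = \mu\, \Ind_{\varepsilon, A(a,f)}$ together with the nearly truncation quasi-greedy inequality gives
\[
a\,\|\Ind_{\varepsilon, A(a,f)}\| \;\le\; \|\UU^{(a)}(f)\| \;\le\; \lambda(a)\,\|f\|.
\]
Lemma~\ref{lem:Ntruncation quasi-greedy}(ii) applied to the subset $A \subseteq A(a,f)$ (with the same signs) then yields $a\,\|\Ind_{\varepsilon|_A, A}\| \le \lambda(1^-)\lambda(a)\|f\|$. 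I would extend this to arbitrary sign choices $\eta \in \EE_A$ by applying the same two-step reasoning to the auxiliary vector $\tilde f = (\Id_\XX - S_A)(f) + a\,\Ind_{\eta, A}$, which lies in $\Cu$ (its coefficients coincide with those of $f$ off $A$ and have modulus exactly $a \le 1$ on $A$) and whose quasi-norm can be bounded in terms of $\|f\|$ via the quasi-triangle inequality and the sign-indicator estimates already derived.

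Finally, writing $a_n = a\, c_n$ with $|c_n|\le 1$, I would decompose $\sum_{n\in A} a_n\,\xx_n$ into pieces of the form $a\,\Ind_{\eta, B}$ for suitable $B \subseteq A$ and $\eta \in \EE_B$. In the real case this amounts to splitting each $c_n$ into its positive and negative parts and dyadically stratifying by modulus; the complex case requires the additional split into real and imaginary parts. The main obstacle lies precisely here: the convex-hull averaging routinely used in Banach spaces is unavailable in a quasi-Banach setting, so one must exploit the Aoki--Rolewicz $p$-convexification of the quasi-norm together with the SUCC property to recombine the pieces into a bound whose overall constant $C$ depends only on the modulus of concavity $\kappa$ of $\XX$, completing the proof.
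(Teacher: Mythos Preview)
Your overall strategy matches the paper's: first control $a\,\|\Ind_{\varepsilon(f),A}\|$ via the nearly truncation quasi-greedy estimate plus SUCC, and then pass from sign indicators to arbitrary coefficients. The paper compresses your steps 3 and 4 into a single citation of \cite{AABW2021}*{Lemma 3.2}, which says that for an SUCC basis with constant $K$ there is a constant $C$ (depending only on the modulus of concavity) such that $\left\|\sum_{n\in A}a_n\,\xx_n\right\|\le CK\max_{n\in A}|a_n|\,\|\Ind_{\varepsilon,A}\|$ for every choice of signs $\varepsilon$; then one simply plugs in $\varepsilon=\varepsilon(f)$ and the nearly truncation quasi-greedy bound.

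There is, however, a genuine gap in your step 3. To bound $\|\tilde f\|$ with $\tilde f=(\Id_\XX-S_A)(f)+a\,\Ind_{\eta,A}$, the quasi-triangle inequality forces you to control either $\|S_A(f)\|$ or $a\,\|\Ind_{\eta,A}\|$. The former is a projection onto a non-greedy set with coefficients of modulus in $[a,1]$, for which you have no estimate at this stage; the latter is precisely the quantity you are trying to bound, so the argument is circular. The ``sign-indicator estimates already derived'' concern only $\Ind_{\varepsilon(f),B}$ and do not help here.

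The fix is to drop the auxiliary vector altogether. SUCC with constant $\lambda(1^-)$ already yields unconditionality for constant coefficients: in the real case, writing $\Ind_{\eta,A}=\Ind_{\varepsilon(f),B}-\Ind_{\varepsilon(f),A\setminus B}$ with $B=\{n\in A:\eta_n=\varepsilon(f)_n\}$ and applying SUCC twice gives $\|\Ind_{\eta,A}\|\le C\lambda(1^-)\|\Ind_{\varepsilon(f),A}\|$ (complex scalars add only a harmless constant). This lets you pass directly from step 2 to step 4, and is exactly what \cite{AABW2021}*{Lemma 3.2} packages together with the $p$-convex dyadic decomposition you sketch in step 4.
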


\begin{proof}
By Lemma~\ref{lem:Ntruncation quasi-greedy}~\ref{it:B} and \cite{AABW2021}*{Lemma 3.2} there is a constant $C$ such that
\[
\left\Vert \sum_{n\in A} a_n\, \xx_n\right\Vert \le C \lambda(1^-) \max_{n\in A} |a_n| \, \Vert \Ind_{\varepsilon, A} \Vert,
\; A\subseteq\NN, \, (a_n)_{n\in A}\in\FF^A,\, \varepsilon\in\EE_A.
\]
Combining the definition of nearly truncation quasi-greedy basis with this inequality yields the desired result.
\end{proof}

\begin{theorem}\label{thm:NUNtruncation quasi-greedy}
Let $\XB=(\xx_n)_{n=1}^\infty$ be a basis of a quasi-Banach space $\XX$. Then $\XB$ is nearly unconditional if and only if it is nearly truncation quasi-greedy. Moreover,
\[
\lambda(a)\lesssim \theta(a)\lesssim \phi(a) \lesssim\frac{1}{a} \lambda(a), \quad 0<a<1.
\]
\end{theorem}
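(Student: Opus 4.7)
Plan. The statement will follow from the chain of inequalities $\lambda(a)\lesssim\theta(a)\le\phi(a)\lesssim\frac{1}{a}\lambda(a)$, so I would aim to establish the three inequalities in that order, with the first being the most subtle.

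The inequality $\theta(a)\le\phi(a)$ is immediate: the choice $A=A(a,f)$ itself satisfies $A\subseteq A(a,f)$, so the defining condition of $\phi(a)$ subsumes the defining condition of $\theta(a)$, giving $\theta(a)\le\phi(a)$ with constant one.

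For $\phi(a)\lesssim\frac{1}{a}\lambda(a)$, I would apply Lemma~\ref{lem:NRPLPU} to the coefficient family $a_n:=a\,\xx_n^*(f)$, $n\in A\subseteq A(a,f)$. Since $f\in\Cu$ gives $|a_n|=a|\xx_n^*(f)|\le a$, and $A\subseteq A(a,f)$ gives $|\xx_n^*(f)|\ge a$, the hypothesis $|a_n|\le a\le|\xx_n^*(f)|$ of Lemma~\ref{lem:NRPLPU} holds. The lemma therefore gives $a\|S_A(f)\|\le C\lambda(1^-)\lambda(a)\|f\|$, and dividing by $a$ (absorbing the constant $\lambda(1^-)$ into the implicit constant) yields $\|S_A(f)\|\lesssim\frac{1}{a}\lambda(a)\|f\|$, as needed.

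The crucial step is $\lambda(a)\lesssim\theta(a)$. Given $f\in\Cu$, set $A=A(a,f)$, $t=\min_{n\in A}|\xx_n^*(f)|\ge a$, and $\varepsilon_n=\sgn(\xx_n^*(f))$ for $n\in A$. The key construction is the ``clipped'' vector $\tilde f$ with $\xx_n^*(\tilde f)=t\varepsilon_n$ for $n\in A$ and $\xx_n^*(\tilde f)=\xx_n^*(f)$ for $n\notin A$. Direct verification shows $\tilde f\in\Cu$, $A(a,\tilde f)=A$, and $\GG^{(a)}(\tilde f)=\UU^{(a)}(f)$, so $\|\UU^{(a)}(f)\|\le\theta(a)\|\tilde f\|$. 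The task then reduces to bounding $\|\tilde f\|$ by a constant times $\|f\|$. Writing $\tilde f=f-v$ with $v=S_A(f)-\UU^{(a)}(f)=\sum_{n\in A}\varepsilon_n(|\xx_n^*(f)|-t)\xx_n$, whose coefficient magnitudes lie in $[0,1-t]$, one applies the SUCC property from Lemma~\ref{lem:Ntruncation quasi-greedy}~\ref{it:B} (with SUCC constant $\lambda(1^-)$) together with the Bernstein-type inequality \cite{AABW2021}*{Lemma 3.2} to obtain $\|v\|\le C\lambda(1^-)(1-t)\|\Ind_{\varepsilon,A}\|$. Using the identity $\|\Ind_{\varepsilon,A}\|=\|\UU^{(a)}(f)\|/t$ and plugging into the quasi-triangle inequality $\|\UU^{(a)}(f)\|\le\theta(a)\|\tilde f\|\le\theta(a)\kappa(\|f\|+\|v\|)$, one rearranges to extract $\|\UU^{(a)}(f)\|\lesssim\theta(a)\|f\|$.

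The main obstacle is the circular appearance of $\|\UU^{(a)}(f)\|$ on both sides of the closing estimate: the direct rearrangement succeeds only when the multiplicative factor $\kappa C\lambda(1^-)\theta(a)(1-t)/t$ is strictly less than $1$. When $t$ is close to $a$ and $\theta(a)$ is large, this naive closure fails, and a supplementary argument, either via a dyadic decomposition of the level sets of $(|\xx_n^*(f)|)_{n\in A}$ that replaces the single clipping by a multi-step interpolation, or by exploiting the monotonicity of $\lambda$ to reduce to a regime where the closure succeeds, will be needed to extract a uniform bound valid for all $a\in(0,1)$.
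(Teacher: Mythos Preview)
Your treatment of $\theta(a)\le\phi(a)$ and of $\phi(a)\lesssim\lambda(a)/a$ is correct, and your argument for the latter via Lemma~\ref{lem:NRPLPU} is in fact neater than the paper's own: the paper carries a dyadic splitting into level sets for both nontrivial directions, whereas your direct substitution $a_n=a\,\xx_n^*(f)$ dispatches this side in one line.

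The direction $\lambda(a)\lesssim\theta(a)$, however, has a genuine gap. First, a minor circularity: you invoke the SUCC constant $\lambda(1^-)$ from Lemma~\ref{lem:Ntruncation quasi-greedy}~\ref{it:B}, but in this direction you are assuming only thresholding-boundedness, so $\lambda$ is not yet available; you should instead argue (as the paper does) that thresholding-bounded bases are SUCC, via the same perturbation trick. Second, and more seriously, your single-step clipping produces an inequality of the form $\|\UU^{(a)}(f)\|\le\kappa\,\theta(a)\bigl(\|f\|+c\,\|\UU^{(a)}(f)\|\bigr)$, and as you correctly diagnose, this cannot be closed once $\kappa\,c\,\theta(a)\ge1$; neither of the escape routes you mention (``monotonicity of $\lambda$'' is again circular, since $\lambda$ is what you are defining) is actually executed.

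The remedy you gesture at, a dyadic decomposition of the level sets, is exactly the paper's route and is not optional. Using SUCC plus \cite{AABW2021}*{Lemma 3.6}, the paper fixes $s>1$ with $\|\Ind_{\varepsilon,A}\|\le C_s\bigl\|\sum_{n\in A}a_n\xx_n\bigr\|$ whenever $1\le|a_n|\le s$, sets $A_j=\{n:s^{-j}\le|\xx_n^*(f)|<s^{-j+1}\}$, and writes $\UU^{(s^{-k})}(f)$ as a $p$-convex combination of the rescaled indicators $s^{-j}\Ind_{\varepsilon,A_j}$. Each piece is then bounded by observing that $s^j S_{A_j}(f)=\GG^{(s^{-1})}\bigl(s^{j-1}(f-\GG^{(s^{-j+1})}(f))\bigr)$, so the lower Bernstein inequality converts $s^{-j}\|\Ind_{\varepsilon,A_j}\|$ into a quantity controlled by $\theta(s^{-1})\,\theta(s^{-j+1})\|f\|$. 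This two-tier use of $\theta$ (once at scale $s^{-1}$, once at scale $s^{-j+1}$) is the device that avoids your circularity, and it is the missing idea in the proposal.
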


\begin{proof}
Without lost of generality we may assume that $\XX$ is $p$-Banach for some $0<p\le 1$. Suppose that $\XB$ is thresholding bounded. Then it is SUCC and so by \cite{AABW2021}*{Lemma 3.6} there are constants $s>1$ and $C_s>1$ such that for any finite set $A\subseteq \NN$ and any $\varepsilon \in \EE_A$,
\[
\Vert \Ind_{\varepsilon,A}\Vert \le C_s\left\Vert \sum_{n\in A} a_n\, \xx_n\right\Vert, \qquad 1\le |a_n|\le s.
\]
For $f\in \mathcal Q\setminus \{0\}$ put
\[
A_1=\{n\in\NN \colon s^{-1} \le |\xx_n^*(f)| \leq 1\},
\]
and
\[
A_j=\{n\in\NN \colon s^{-j} \le |\xx_n^*(f)| < s^{-j+1}\}, \quad j\in\NN\setminus\lbrace 1\rbrace.
\]
Fix $k$ such that $\UU^{(s^{-k})}(f)\not=0$, and let
\[
l=:\max\{1\le j\le k : A_j\not=\emptyset \}.
\]
Then
\[
\UU^{(s^{-k})}(f)=\UU^{(s^{-l})}(f)\]
\text{and}
\[ s^{-l}\le b:= \min_{n\in A_l}|\xx_n^*(f)|<s^{-l+1}.
\]
We have
\[
\UU^{(s^{-l})}(f) = s^{l}b \sum_{j=1}^l s^{-l+j} s^{-j} \Ind_{\varepsilon(f),A_j}.
\]
Therefore, by $p$-convexity,
\begin{align*}
\|\UU^{(s^{-l})}(f)\|
&\le s^{l}b \left( \sum_{j=1}^l s^{(j-l)p}\right)^{1/p} \sup_{1\le j \le l} s^{-j} \Vert \Ind_{\varepsilon,A_j} \Vert\\
&\le C_{p,s} \sup_{1\le j \le l} s^{-j} \Vert \Ind_{\varepsilon,A_j} \Vert,
\end{align*}
where $C_{p,s}=s(1-s^{-p})^{-1/p}$. In particular, for $j=1$,
\begin{align*}
s^{-1}\|\Ind_{\varepsilon,A_1}\|& \le s^{-1}C_s \left\Vert \sum_{n\in A_1} s \xx_n^*(f) \, \xx_n\right\Vert\\
&=C_s\|\GG^{\left(s^{-1}\right)}\left(f\right)\|\\
&\le C_s\theta(s^{-1})\|f\|\\
&\le 2C_s\theta(s^{-1})\theta\left(s^{-k+1}\right)\|f\|.
\end{align*}
If $l>1$, for $2\le j \le l$ we have
\begin{align*}
s^{-j} \| \Ind_{\varepsilon,A_j} \|&\le s^{-j}C_s\left\Vert \sum_{n\in A_j} s^j \xx_n^*(f) \, \xx_n\right\Vert\\
&=C_s\left\Vert \GG^{\left(s^{-j}\right)}\left(f-\GG^{\left(s^{-j+1}\right)}\left(f\right)\right)\right\Vert\\
&=C_s\left\Vert s^{-j+1} \GG^{\left(s^{-1}\right)}\left(s^{j-1}\left(f-\GG^{\left(s^{-j+1}\right)}\left(f\right)\right)\right)\right\Vert\\
&\le C_s s^{-j+1}\theta\left(s^{-1}\right)\left\Vert s^{j-1}\left(f-\GG^{\left(s^{-j+1}\right)}\left(f\right)\right)\right\Vert\\
&\le 2 C_s\theta\left(s^{-1}\right)\theta \left(s^{-j+1}\right)\|f\|\\
& \le 2 C_s\theta\left(s^{-1}\right)\theta \left(s^{-k+1}\right)\|f\|.
\end{align*}
This proves that
\begin{align*}
\lambda\left(s^{-k}\right)\le& 2C_{p,s} C_s \theta\left(s^{-1}\right)\theta \left(s^{-k+1}\right), \qquad k\in \NN.
\end{align*}

Given $0<a<1$ and $f$ as before, pick $k\in \NN$ so that $s^{-k}\le a<s^{-k+1}$. Since $\theta$ and $\lambda$ are both non-increasing,
\begin{align*}
\lambda\left(a\right)\le& \lambda\left(s^{-k}\right)\le 2C_{p,s} C_s \theta\left(s^{-1}\right)\theta \left(s^{-k+1}\right)\le 2C_{p,s} C_s \theta\left(s^{-1}\right)\theta \left(a\right).
\end{align*}
Suppose that $\XB$ is nearly truncation quasi-greedy. Then it is SUCC by Lemma~\ref{lem:Ntruncation quasi-greedy}(ii). Therefore, by \cite{AABW2021}*{Lemma 3.2}, there is $C_u>1$ such that
\[
\left\Vert \sum_{n\in A} a_n\, \xx_n\right\Vert \le C_u \Vert \Ind_{\varepsilon,A}\Vert,
\quad |a_n|\le 1, \ |A|<\infty.
\]

Let $a\le 1/2$. Pick $s\in[2,4)$ and $k\in\NN$ such that $a=s^{-k}$. Let $f\in \Cu$, $k\in\NN$ and $A\subseteq \{n\in\NN \colon |\xx_n^*(f)|\ge s^{-k}\}$. Set
\begin{align*}
A_j&=\{n\in\NN \colon s^{-j}\le |\xx_n^*(f)| < s^{-j+1}\}, \quad j\in\NN,\\
B_j&=\{n\in\NN \colon s^{-j}\le |\xx_n^*(f)| \}, \quad j\in\NN\cup\{0\}
\end{align*}
(in the definition of $A_1$ we replace `$<$' with `$\le$'). Reasoning as before from the expansion
\begin{equation}
S_A(f)= \sum_{j=1}^k s^{-j} s^j S_{A_j}(f) \label{ec3}
\end{equation}
and taking into account that $\lambda$ is non-increasing, we obtain
\begin{align*}
\Vert S_A(f)\Vert
&\le C_{p,s} C_u \sup_{1\le j \le k} s \Vert \Ind_{\varepsilon(f),A_j} \Vert \\
&\le C_{p,s} C_u 2^{1/p} s \sup_{1\le j \le k} ( \Vert \Ind_{\varepsilon(f),B_j} \Vert+ \Vert \Ind_{\varepsilon(f),B_{j-1}}\Vert)\\
&\le C_{p,s} C_u 2^{1/p+1} s \sup_{1\le j \le k} (s^{j-1} \lambda(s^{-j+1}) + s^j \lambda(s^{-j}))\\
&\le C_{p,s} C_u 2^{1/p+1} (s+1) \frac{\lambda(a)}{a} \\
&\le \frac{5 2^{1/p+5}}{(2^{p}-1)^{1/p}} C_u \frac{\lambda(a)}{a}.
\qedhere
\end{align*}
\end{proof}

Our next result is a straightforward consequence of Theorem~\ref{thm:NUNtruncation quasi-greedy}.

\begin{theorem}
Truncation quasi-greedy bases are nearly unconditional.
\end{theorem}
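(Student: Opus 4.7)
The plan is to reduce this result to the characterization of nearly unconditional bases just established in Theorem~\ref{thm:NUNtruncation quasi-greedy}. Concretely, I will show that every truncation quasi-greedy basis is nearly truncation quasi-greedy (in fact, with the function $\lambda$ uniformly bounded in $a$), after which the theorem does the rest.

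The key observation is elementary: for any $f\in\XX$ and any $a>0$, the threshold set $A(a,f)=\{n\in\NN : |\xx_n^*(f)|\ge a\}$ is itself a greedy set of $f$ of cardinality $m:=|A(a,f)|$, because every index in $A(a,f)$ carries a coefficient of modulus at least $a$, while every index outside carries one of modulus strictly less than $a$; hence the defining inequality $|\xx_n^*(f)|\ge|\xx_k^*(f)|$ for $n\in A(a,f)$, $k\notin A(a,f)$, holds automatically. Selecting $A(a,f)$ as the greedy set $A_m(f)$ in the definition of the restricted truncation operator, we obtain the identity
\[
\UU^{(a)}(f)=\UU(f,A(a,f))=\UU_m(f).
\]

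Assuming $\XB$ is truncation quasi-greedy, set $K:=\sup_{m\in\NN}\|\UU_m\|<\infty$. The previous identity then yields $\|\UU^{(a)}(f)\|\le K\|f\|$ for every $f\in\Cu$ and every $a\in(0,1)$, so $\XB$ is nearly truncation quasi-greedy with $\lambda(a)\le K$ uniformly in $a$. Invoking Theorem~\ref{thm:NUNtruncation quasi-greedy} concludes that $\XB$ is nearly unconditional, and as a bonus one reads off the quantitative estimate $\phi(a)\lesssim \lambda(a)/a\le K/a$ for the nearly unconditional function.

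There is no genuine obstacle: the only subtlety is recognizing that the threshold set $A(a,f)$ automatically qualifies as a greedy set, which turns the nearly-truncation-quasi-greedy property (for each fixed $a$) into a direct specialization of the uniform bound $\sup_m\|\UU_m\|<\infty$ defining truncation quasi-greediness. Everything else is a citation of the previously proved equivalence.
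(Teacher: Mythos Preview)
Your argument is correct and is exactly the approach the paper takes: the paper merely states that the result is ``a straightforward consequence of Theorem~\ref{thm:NUNtruncation quasi-greedy}'' without further detail, and you have simply filled in that straightforward step by observing that each threshold set $A(a,f)$ is a greedy set, so the uniform bound $\sup_m\Vert\UU_m\Vert<\infty$ immediately gives $\lambda(a)\le K$ for all $a\in(0,1)$.
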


In order to quantify the conditionality of a basis $\XB=(\xx_n)_{n=1}^\infty$ in a quasi-Banach space $\XX$, it is customary to study the growth of its \emph{unconditionality constants}
\[
\kk_m=\kk_m[\XB,\XX] :=\sup_{|A|\le m} \Vert S_A[\XB,\XX]\Vert, \quad m\in\NN.
\]

An asymptotic upper bound for the unconditionality constants of truncation quasi-greedy bases in $p$-Banach spaces was estimated in \cite{AAW2021b}, where the following theorem was proved.

\begin{theorem}[\cite{AAW2021b}*{Theorem 5.1}]\label{thm:estimatetruncqg}
Let $\XB$ be a truncation quasi-greedy basis of a $p$-Banach space $\XX$, $0<p\le 1$. Then
\[
\kk_m[\XB,\XX]\lesssim (\log m)^{1/p} , \quad m\ge 2.
\]
\end{theorem}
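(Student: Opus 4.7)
The plan is to prove the $(\log m)^{1/p}$ bound on the unconditionality constants via a dyadic decomposition of the coefficient magnitudes of $f$, separating the ``large'' from the ``small'' coefficients and exploiting $p$-convexity at both steps. Fix $f\in\XX$ with $\|f\|=1$ and a finite $A\subseteq\NN$ with $|A|\le m$, and set $M:=\sup_n|\xx_n^*(f)|$. Since the dual basis is norm-bounded, $M$ is bounded by a constant. Choose the threshold $t:=M m^{-1/p}$ and split $A=A^\sharp\sqcup A^\flat$, where $A^\sharp=\{n\in A:|\xx_n^*(f)|\ge t\}$ and $A^\flat=A\setminus A^\sharp$.

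For the big part, I would introduce the dyadic layers $A_j=A^\sharp\cap\{n: M 2^{-j}\le|\xx_n^*(f)|<M 2^{-j+1}\}$, which are non-empty only for $1\le j\le J:=\lfloor p^{-1}\log_2 m\rfloor+1$. Each $A_j$ sits inside the natural greedy-style set $\Lambda_j:=\{n:|\xx_n^*(f)|\ge M 2^{-j}\}$, and truncation quasi-greediness applied to $f$ and $\Lambda_j$ yields $M 2^{-j}\|\Ind_{\varepsilon(f),\Lambda_j}\|\lesssim 1$. Because truncation quasi-greediness implies SUCC, passing to the subset $A_j\subseteq\Lambda_j$ gives $\|\Ind_{\varepsilon(f),A_j}\|\lesssim 2^j/M$. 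Combining this with the estimate $\|\sum_{n\in B}a_n\xx_n\|\lesssim\max_n|a_n|\,\|\Ind_{\varepsilon,B}\|$ from \cite{AABW2021}*{Lemma 3.2}, which holds under SUCC, and using $\max_{n\in A_j}|\xx_n^*(f)|\le M 2^{-j+1}$, one obtains $\|S_{A_j}(f)\|\lesssim 1$ uniformly in $j$. Summing over layers via $p$-subadditivity,
\[
\|S_{A^\sharp}(f)\|^p\le\sum_{j=1}^J\|S_{A_j}(f)\|^p\lesssim J\lesssim \log m,
\]
so that $\|S_{A^\sharp}(f)\|\lesssim(\log m)^{1/p}$.

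For the small part, each $n\in A^\flat$ satisfies $|\xx_n^*(f)|<t=M m^{-1/p}$ and $|A^\flat|\le m$, so $p$-subadditivity and norm-boundedness of $\XB$ give
\[
\|S_{A^\flat}(f)\|^p\le\sum_{n\in A^\flat}|\xx_n^*(f)|^p\,\|\xx_n\|^p\lesssim m\cdot M^p m^{-1}\lesssim 1,
\]
hence $\|S_{A^\flat}(f)\|\lesssim 1$. A final application of $p$-subadditivity combines the two bounds into $\|S_A(f)\|\lesssim(\log m)^{1/p}$, which is the desired estimate on $\kk_m$.

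The key choice, and the main technical point, is the threshold $t=M m^{-1/p}$: below it, the at most $m$ terms of size $\le t$ contribute only $O(1)$ in $p$-norm by sheer volume; above it, only $O(p^{-1}\log m)$ dyadic layers can be populated, each controlled by a constant through truncation quasi-greediness together with SUCC. It is precisely the $p$-convexity that allows summing the $J\sim\log m$ layers to yield $J^{1/p}\sim(\log m)^{1/p}$ and that makes the tail cutoff $Mm^{-1/p}$ the right place to balance the two estimates.
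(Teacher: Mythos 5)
Your proof is correct and matches the paper's approach to its more general Theorem~\ref{thm:NUCC} (of which the quoted statement is the special case $\lambda$ bounded): a dyadic decomposition of the coefficient magnitudes, a threshold at roughly $m^{-1/p}$ separating a small tail handled by counting from $O(\log m)$ layers handled uniformly by the restricted-truncation bound combined with SUCC and \cite{AABW2021}*{Lemma 3.2}, and $p$-convexity to assemble the $(\log m)^{1/p}$ bound. The only cosmetic difference is that the paper packages the per-layer estimate into Lemma~\ref{lem:NRPLPU} and uses layer ratio $2^{1/p}$ (giving $N\approx\log_2 m$ layers) while you use ratio $2$ (giving $\approx p^{-1}\log_2 m$ layers), which changes nothing beyond constants.
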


Next we show that it is also possible to quantify the conditionality of nearly unconditional basis in terms of the growth of the function $\lambda$, thus extending \cite{DKK2003}*{Lemma 8.2} and Theorem~\ref{thm:estimatetruncqg}.

\begin{theorem}\label{thm:NUCC}
Let $\XX$ be a $p$-Banach space, $0<p\le 1$. If $\XB=(\xx_n)_{n=1}^\infty$ is a nearly unconditional basis of $\XX$ then
\[
\kk_m[\XB,\XX] \lesssim \lambda\left( \frac{1}{m^{1/p}} \right) (\log m)^{1/p}, \quad m\ge 2.
\]
\end{theorem}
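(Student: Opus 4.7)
By Theorem~\ref{thm:NUNtruncation quasi-greedy}, $\XB$ is nearly truncation quasi-greedy with function $\lambda$; by Lemma~\ref{lem:Ntruncation quasi-greedy}\ref{it:B} it is SUCC, so \cite{AABW2021}*{Lemma 3.2} supplies a constant $C_u\ge 1$ with
\[
\Vert \sum_{n\in B} a_n\,\xx_n\Vert \le C_u \Vert \Ind_{\varepsilon,B}\Vert \qquad \text{whenever } |a_n|\le 1,\ \varepsilon\in\EE_B,\ B\subseteq\NN\text{ finite.}
\]
The plan is to mimic the second, dyadic part of the proof of Theorem~\ref{thm:NUNtruncation quasi-greedy}, but with the number of dyadic levels pinned down by $|A|\le m$ rather than by a preassigned threshold $a$.

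By homogeneity of $S_A$, I may assume $\max_n|\xx_n^*(f)|=1$, so $f\in\Cu$ and $\Vert f\Vert\ge 1/M$, where $M=\sup_n\Vert \xx_n^*\Vert<\infty$. Fix $s\in(1,\infty)$ (say $s=2$) and let $k$ be the unique positive integer with $s^{-k-1}< m^{-1/p}\le s^{-k}$, so that $k\asymp\log m$. Split $A=A'\sqcup A''$ at threshold $s^{-k}$, with $A'=\{n\in A:|\xx_n^*(f)|\ge s^{-k}\}$. On $A''$ we have $|\xx_n^*(f)|<s^{-k}<sm^{-1/p}$ and $|A''|\le m$, so $p$-convexity and norm-boundedness of the basis give
\[
\Vert S_{A''}(f)\Vert^p \le \sum_{n\in A''}|\xx_n^*(f)|^p\Vert\xx_n\Vert^p \lesssim 1 \lesssim \Vert f\Vert^p.
\]

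For $A'$, decompose dyadically $A'=\bigsqcup_{j=1}^k A_j$ with $A_j=\{n\in A: s^{-j}\le|\xx_n^*(f)|<s^{-j+1}\}$ (with the usual convention at $j=1$). On $A_j$ the coefficients $\xx_n^*(f)/s^{-j+1}$ have modulus at most $1$, so the perturbation inequality yields $\Vert S_{A_j}(f)\Vert \le s^{-j+1}C_u\Vert \Ind_{\varepsilon(f),A_j}\Vert$. Observe that $A_j\subseteq A(s^{-j},f)$ and that $\UU^{(s^{-j})}(f)=\bigl(\min_{A(s^{-j},f)}|\xx_n^*(f)|\bigr)\Ind_{\varepsilon(f),A(s^{-j},f)}$ with the minimum $\ge s^{-j}$; hence, using SUCC and the definition of $\lambda$,
\[
\Vert \Ind_{\varepsilon(f),A_j}\Vert \le \lambda(1^-)\Vert \Ind_{\varepsilon(f),A(s^{-j},f)}\Vert \le \lambda(1^-)\,s^{j}\lambda(s^{-j})\Vert f\Vert.
\]
Therefore $\Vert S_{A_j}(f)\Vert \lesssim \lambda(s^{-j})\Vert f\Vert$, and since $\lambda$ is non-increasing (Lemma~\ref{lem:Ntruncation quasi-greedy}\ref{it:A}) and $s^{-j}\ge s^{-k}\ge m^{-1/p}$ for $1\le j\le k$, we have $\lambda(s^{-j})\le\lambda(m^{-1/p})$. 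Summing via $p$-convexity,
\[
\Vert S_{A'}(f)\Vert^p \le \sum_{j=1}^k \Vert S_{A_j}(f)\Vert^p \lesssim k\,\lambda(m^{-1/p})^p\Vert f\Vert^p,
\]
so that $\Vert S_{A'}(f)\Vert \lesssim (\log m)^{1/p}\lambda(m^{-1/p})\Vert f\Vert$. Combining with the bound on $A''$, and using $\lambda\ge 1$, proves the claim.

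The main subtlety is the simultaneous choice of $k$: one needs $s^{-k}\ge m^{-1/p}$ so that $\lambda(s^{-j})\le\lambda(m^{-1/p})$ throughout the dyadic sum, and at the same time $s^{-k}< sm^{-1/p}$ so that the $p$-sum of the small coefficients on $A''$ remains bounded by a constant. The range $m^{-1/p}\le s^{-k}<sm^{-1/p}$ balances both requirements and is what converts the hypothesis $|A|\le m$ into the logarithmic factor $(\log m)^{1/p}$.
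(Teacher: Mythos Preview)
Your proof is correct and follows essentially the same route as the paper's: normalize so that $f\in\Cu$, split $A$ at a threshold comparable to $m^{-1/p}$, bound the small-coefficient part trivially using $p$-convexity together with $|A|\le m$, and handle the large-coefficient part by a dyadic decomposition into $\asymp\log m$ bands, each controlled by $\lambda$. The only cosmetic differences are that the paper chooses ratio $2^{1/p}$ (giving $N\approx\log_2 m$ levels) while you take $s=2$ (giving $k\approx(1/p)\log_2 m$ levels), and the paper invokes Lemma~\ref{lem:NRPLPU} where you instead unpack that lemma by combining SUCC with \cite{AABW2021}*{Lemma 3.2} and the definition of $\lambda$ directly.
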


\begin{proof}
Let $N\in\NN$ with $2^N \le m <2^{N+1}$. Let $A\subseteq\NN$ with $|A|\le m$ and let $f\in\XX$ with $\Vert f \Vert \le 1/c$, where $c=\sup_{n \in\NN} \Vert \xx_n^*\Vert$. Then $f\in \Cu$. Set
\[
A_0=\{n\in A\colon 2^{-N/p}< |\xx_n^*(f)|\}
\]
and $A_1=A\setminus A_0$. We have $S_A(f)=S_{A_0}(f)+S_{A_1}(f)$. Denote $d=\sup_{n\in\NN} \Vert \xx_n\Vert$. Since $|A_1|\le m$,
\[
\Vert S_{A_1} (f)\Vert^p \le |A_1| d^p 2^{-N}\le 2 d^p.
\]
Let $(B_k)_{k=1}^N$ be the partition of $A_0$ given by
\[
B_k=\{n\in A \colon 2^{-k/p}< |\xx_n^*(f)|\le 2^{(-k+1)/p}\},
\]
so that $S_{A_0}(f)=\sum_{k=1}^N S_{B_k}(f)$. We have
\[
\max_{n\in B_k} |\xx_n^*(2^{-1/p}f)|\le 2^{-k/p} \le \min_{n\in B_k} |\xx_n^*(f)|, \quad k=1,\dots, N.
\]
An application of Lemma~\ref{lem:NRPLPU} gives
\[
2^{-1/p}\Vert S_{B_k} (f)\Vert \le C \lambda(1^-) \lambda( 2^{-k/p})\Vert f\Vert, \quad k=1,\dots, N.
\]
Finally, since $\lambda$ is non-increasing,
\begin{align*}
\Vert S_{A_0}(f)\Vert^p
&\le 2 C^p \lambda^p(1^-) \sum_{k=1}^N \lambda^p( 2^{-k/p}) \Vert f\Vert^p \\
&\le 2 \frac{1}{c^p} C^p \lambda^p(1^-) \log_2(m) \lambda^p( m^{-1/p}).\qedhere
\end{align*}
\end{proof}

\begin{remark}
The best one can say about the asymptotic estimates for the unconditionality constants of general quasi-greedy bases in Banach spaces is that
\[
\kk_{m} \lesssim \log m, \qquad m\ge 2.
\]
This is the statement of Theorem~\ref{thm:estimatetruncqg} for $p=1$, which in this particular case follows by a result of Dilworth et al.\ (see \cite{DKK2003}*{Lemma 8.2}). Later on, it was evinced that the geometry of certain Banach spaces may contribute to improve this estimate. For instance, quasi-greedy bases in super-reflexive Banach spaces verify instead
\begin{equation}\label{AAGHRestimate}
\kk_{m}[\XB,\XX] \lesssim (\log m)^{1-\varepsilon}, \qquad m\ge 2.
\end{equation}
for some $0<\varepsilon<1$ depending on $\XB$ and $\XX$ (see \cite{AAGHR2015}*{Theorem 1.1}). As a possible limitation of our methods in this paper, we notice that they do not allow one to extend \eqref{AAGHRestimate} to truncation quasi-greedy bases. This suggests the question of whether truncation quasi-greedy bases in super-reflexive Banach space $\XX$ are susceptible to better asymptotic estimates along the lines of \cite{AAGHR2015}*{Theorem 1.1}. \end{remark}

\section{Existence and uniqueness of truncation quasi-greedy bases in Banach spaces}\noindent
The results of this section aim at complementing and extending to truncation quasi-greedy bases the theoretical study on the existence and uniqueness of quasi-greedy bases in Banach spaces from \cite{DKK2003}. In this pioneering article, Dilworth et al.\ proved that $c_{0}$ is the unique Banach space (up to isomorphism) with a unique quasi-greedy basis (up to equivalence). Their results relied on very deep concepts from classical Banach space theory, so we begin by recalling these important facts. A Banach space $\XX$ is called a \emph{GT space} (after ``Grothendieck Theorem'') \cite{Pisier1986} if every bounded linear operator $T\colon \XX\to\ell_2$ is absolutely summing, i.e., there is a constant $C$ such that for all finite collections of functions $(f_k)_{k\in B}$ in $\XX$,
\begin{equation}\label{ASE}
\sum_{k\in B} \Vert T(f_k) \Vert_2
\le C \sup \left\{ \left\Vert \sum_{k\in B} \varepsilon_k f_k\right\Vert \colon (\varepsilon_k)_{k\in B}\in\EE_B \right\}.
\end{equation}
The smallest constant $C$ in \eqref{ASE} is the absolutely summing norm of $T$ and is denoted by $\pi_1(T)$. By the Closed Graph theorem, if $\XX$ is a GT space, there is a constant $C_g$, called the GT constant of $\XX$, such that $\pi_1(T)\le C_g \Vert T \Vert$ for all $T\in\LO(\XX,\ell_2)$.

For instance, Lindenstrauss and Pe{\l}czy\'{n}ski \cite{LinPel1968} proved that $L_{1}(\mu)$ spaces and, more generally, $\SL_1$-spaces are GT spaces. In turn, if $\XX$ is an $\SL_{\infty}$-space then $\XX^{\ast}$ is an $\SL_1$-space and, then, a GT space (see \cite{LinRos1969}). The aforementioned result from \cite{DKK2003} on the existence and uniqueness of quasi-greedy bases in Banach spaces relies in part on the following proposition which we will also use.

\begin{proposition}[\cite{DKK2003}*{Proposition 8.1}]\label{DKK2003Prop8.1}
Suppose that $\XB$ is a thresholding bounded Hilbertian basis of a Banach space $\XX$. If $\XX^{\ast}$ is a GT space, then $\XB$ is equivalent to the unit vector basis of $c_{0}$.
\end{proposition}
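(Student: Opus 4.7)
The strategy combines the upper $\ell_2$-estimate from Hilbertianity of $\XB$, the Grothendieck–Pietsch $1$-summing property implied by $\XX^*$ being a GT space, and the restricted unconditionality (SUCC and \textsc{ucc}) available from the thresholding hypothesis through Theorem~\ref{thm:NUNtruncation quasi-greedy} and Lemma~\ref{lem:Ntruncation quasi-greedy}.

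First I would set up Pietsch's theorem for the adjoint. Hilbertianity makes $T\colon\ell_2\to\XX$, $T(e_n)=\xx_n$, bounded with norm $M$, and $T^*\colon\XX^*\to\ell_2$ satisfies $T^*(\xx_n^*)=e_n$. Since $\XX^*$ is GT, $T^*$ is absolutely $1$-summing with $\pi_1(T^*)\le C_g M$, and Pietsch's domination theorem yields a probability Radon measure $\mu$ on $(B_{\XX^{**}},w^*)$ for which
\[
\|T^*(g^*)\|_2\le C_g M\int_{B_{\XX^{**}}}|g^*(x^{**})|\,d\mu(x^{**}),\quad g^*\in\XX^*.
\]
Plugging the sequence $(\xx_n^*)_{n\in A}$ into the $1$-summing inequality gives
\[
|A|\le C_g M\sup_{\varepsilon\in\EE_A}\|\Ind_{\varepsilon,A}[\XB^*,\XX^*]\|_{\XX^*},
\]
so some $\eta=\eta(A)\in\EE_A$ satisfies $\|\Ind_{\eta,A}[\XB^*,\XX^*]\|_{\XX^*}\ge|A|/(C_g M)$; a Hahn–Banach argument then produces a witness $f\in B_\XX$ with $\sum_{n\in A}\eta_n\xx_n^*(f)\ge|A|/(2C_g M)$.

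Such an $f$ forces a subset $A^+\subseteq A$ of size $|A^+|\gtrsim|A|$ on which $|\xx_n^*(f)|$ is bounded below by a universal constant $c>0$. Applying the thresholding hypothesis at level $c$ gives $\|\GG^{(c)}(f)\|\le\theta(c)$; combined with the dyadic lattice inequality (Lemma~\ref{lem:NRPLPU}, i.e., \cite{AABW2021}*{Lemma 3.2}) and SUCC (Lemma~\ref{lem:Ntruncation quasi-greedy}(ii)) this produces a uniform bound $\|\Ind_{\varepsilon,A^+}\|_\XX\le C_0$ independent of $A$, and, after upgrading SUCC to \textsc{ucc} by sign-splitting, also independent of $\varepsilon$. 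Iterating the extraction on $A\setminus A^+$ partitions $A$ into controlled blocks, and a careful combinatorial argument — exploiting the Pietsch measure and averaging over sign vectors $\eta$ to force the extracted blocks $A^{(j)}$ to decay geometrically in size — yields the uniform bound $\varphi[\XB,\XX](m)\le C$. The lattice inequality then gives $\|\sum_n a_n\xx_n\|_\XX\lesssim\sup_n|a_n|$, so $\XB$ is dominated by the unit vector basis of $c_0$; the reverse domination is automatic from the semi-normalization of $\XB^*$, yielding equivalence with the $c_0$-basis. The main obstacle is precisely this iterative extraction: a naive repetition of the argument yields only $\varphi(m)\lesssim\log m$, and the bounded estimate requires a finer exploitation of the $L_1$-domination inherent to Pietsch's measure.
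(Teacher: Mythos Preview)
The paper does not give its own proof of this proposition: it is quoted verbatim from \cite{DKK2003}*{Proposition 8.1} and used as a black box in the proof of Theorem~\ref{thm:NTQGGTDual}. So there is no ``paper's proof'' to compare against, only the original argument in \cite{DKK2003}.

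That said, your proposal has a genuine gap that you yourself identify. The entire force of the conclusion rests on the ``careful combinatorial argument'' that upgrades the naive iterative extraction bound $\varphi(m)\lesssim\log m$ to a uniform bound $\varphi(m)\le C$. You do not carry this out; you merely assert that ``a finer exploitation of the $L_1$-domination inherent to Pietsch's measure'' will do it, without indicating what that exploitation is. Without it, all you have shown is that every finite $A$ contains a subset of positive density on which $\Ind_{\varepsilon,A^+}$ is uniformly bounded, and iterating this on the complements gives exactly the $\log m$ estimate you want to avoid. This is not a detail that can be filled in routinely: it is the heart of the matter.

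For contrast, the approach actually used in \cite{DKK2003} (and mirrored in this paper's Lemma~\ref{lem:BS}) is a bootstrap on the Hilbertianity exponent rather than a direct attack on $\varphi$. One shows that if $\XB$ is $p$-Hilbertian, the GT property of $\XX^*$ combined with the multiplier bounds coming from thresholding-boundedness forces $\XB$ to be $q$-Hilbertian for some $q>p$; iterating pushes $p$ to $\infty$, which is precisely the $c_0$-domination. The Pietsch factorization is never invoked, and no extraction or combinatorics on index sets is needed.
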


For expository sake we recall that a basis $\XB=(\xx_{n})_{n=1}^{\infty}$ of a quasi-Banach space $\XX$ is \emph{$p$-Hilbertian}, $0<p\le \infty$, if it is dominated by the unit vector basis of $\ell_p$, i.e., $(\xx_{n})_{n=1}^{\infty}$ satisfies the upper $p$-estimate,
\begin{equation}\label{pHilbertian}
\Vert f\Vert= \left\Vert \sum_{n=1}^{\infty}\xx_{n}^{\ast}(f)\xx_n\right\Vert\le C_{p}\left(\sum_{n=1}^{\infty}|\xx_{n}^{\ast}(f)|^{p}\right)^{1/p}
\end{equation}
for some constant $C_{p}$. The basis is called \emph{Hilbertian} if it is $2$-Hilbertian. A $p$-Hilbertian basis induces a continuous linear embedding of $\ell_{p}$ into $\XX$,
\[
\ell_{p}\hookrightarrow \XX, \quad (a_{n})_{n=1}^{\infty}\mapsto \sum_{n=1}^{\infty} a_{n}\xx_{n},
\]
whose norm is the smallest constant $C_{p}$ in \eqref{pHilbertian}.

Of course, any basis of a $p$-Banach space, $0<p\le 1$, is $p$-Hilbertian. If $\XB=(\xx_n)_{n=1}^\infty$ is $p$-Hilbertian for some $0< p<\infty$, by duality the map
\[
\XX^*\to \FF^\NN, \quad f^*\mapsto (f^*(\xx_n))_{n=1}^{\infty},
\]
is bounded from $\XX^*$ into $\ell_{p'}$, where $p'\in[1,\infty]$ is related to $p$ by
\[
\frac{1}{p'}=1-\frac{1}{\max\{1,p\}}.
\]

In order to obtain properties that are more closely tied to a given basis in a certain space we need to refine the embeddings, and this is accomplished using Lorentz sequence spaces. For the pupose of this paper, it suffices to consider classical Lorentz spaces. Given $0<p<\infty$ and $0<q\le\infty$, the Lorentz sequence space $\ell_{p,q}$ is the quasi-Banach space consisting of all $f\in c_0$ whose non-increasing rearrangement $(a_n)_{n=1}^\infty$ satisfies
\begin{equation}\label{eq:CLSS}
\Vert f \Vert_{\ell_{p,q}}=\left( \sum_{n=1}^\infty a_n^q n^{q/p-1}\right)^{1/q}.
\end{equation}
If $q=\infty$ the quasi-norm is defined with the usual modification in \eqref{eq:CLSS}, and the resulting space is known as weak-$\ell_p$.
We will need the following consequence of the embeddings obtained in \cite{AABW2021}.

\begin{theorem}\label{thm:AABW}
Let $\XB$ be a basis of a $p$-Banach space $\XX$, $0<p\le 1$. Suppose that, for some $0<r<\infty$, there is a constant $C$ such that
\[
\Vert \Ind_{\varepsilon,A} \Vert \le C m^{1/r}, \quad A\subseteq\NN, \, |A|\le m, \, \varepsilon\in\EE_A.
\]
Then the unit vector system of $\ell_{r,p}$ dominates $\XB$.
\end{theorem}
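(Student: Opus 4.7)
The plan is to construct a bounded operator $T\colon\ell_{r,p}\to\XX$ sending the unit vector basis of $\ell_{r,p}$ to $\XB$, using a dyadic block decomposition of the coefficient sequence organised by the rank of its entries.

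First I would upgrade the hypothesis, which only concerns unimodular coefficients, to a bound for arbitrary bounded coefficients:
\[
\left\|\sum_{n\in A} a_n \xx_n\right\| \le C'\, m^{1/r}, \qquad |A|\le m,\ \sup_n|a_n|\le 1.
\]
For real scalars this follows from the binary expansion $a_n=\sgn(a_n)\sum_{j=1}^\infty b_{n,j}2^{-j}$ with $b_{n,j}\in\{0,1\}$, which lets one rewrite $\sum_{n\in A} a_n\xx_n=\sum_{j=1}^\infty 2^{-j}\Ind_{\varepsilon,A_j}$ with $A_j=\{n\in A: b_{n,j}=1\}$; then $p$-convexity of the quasi-norm together with $\|\Ind_{\varepsilon,A_j}\|\le Cm^{1/r}$ gives the estimate. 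The complex case is handled by splitting into real and imaginary parts at a controlled cost.

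Next, given a finitely supported sequence $(a_n)$ with non-increasing rearrangement $(a_n^*)_{n=1}^\infty$, I would let $B_k$ be the set of indices whose modulus has rank in $[2^k,2^{k+1})$, so that $|B_k|\le 2^k$ and $|a_n|\le a_{2^k}^*$ for $n\in B_k$. Applying the stage-one estimate to the rescaled coefficients $a_n/a_{2^k}^*$ on $B_k$ gives $\|\sum_{n\in B_k} a_n\xx_n\|\le C'a_{2^k}^*\,2^{k/r}$, and then $p$-convexity yields
\[
\left\|\sum_n a_n\xx_n\right\|^p \le (C')^p\sum_{k=0}^\infty (a_{2^k}^*)^p 2^{kp/r}.
\]

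Finally I would compare this with $\|(a_n)\|_{\ell_{r,p}}^p=\sum_{n=1}^\infty (a_n^*)^p n^{p/r-1}$ by a standard dyadic block argument: for $k\ge 1$ the monotonicity of $(a_n^*)$ and elementary estimates for $n^{p/r-1}$ on $[2^{k-1},2^k)$ give $(a_{2^k}^*)^p2^{kp/r}\lesssim\sum_{n=2^{k-1}}^{2^k-1}(a_n^*)^p n^{p/r-1}$, while the $k=0$ term is bounded by $(a_1^*)^p\le\|(a_n)\|_{\ell_{r,p}}^p$. Summing shows the map $T\colon c_{00}\to\XX$ defined by $T((a_n))=\sum a_n\xx_n$ satisfies $\|T((a_n))\|\lesssim\|(a_n)\|_{\ell_{r,p}}$; since $c_{00}$ is dense in $\ell_{r,p}$ (using $p<\infty$) and $\XX$ is complete, $T$ extends by continuity to $\ell_{r,p}$, yielding the claimed domination. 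I expect the main difficulty to be purely bookkeeping, namely tracking constants through the $p$-convex estimates and verifying the dyadic comparison in both regimes $p\le r$ and $p>r$; conceptually no step requires more than standard $p$-Banach tools.
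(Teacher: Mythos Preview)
Your proof is correct. The paper's own proof, however, does not argue directly: it consists of the single observation that $\ell_{r,p}$ coincides with the weighted Lorentz space $d_{1,p}(\ww)$ for the weight whose primitive sequence is $(m^{1/r})_{m=1}^\infty$, followed by an appeal to \cite{AABW2021}*{Corollary 9.13}. What you have written is essentially a self-contained unpacking of that citation: the first step (passing from unimodular to arbitrary bounded coefficients via binary expansion and $p$-convexity) is the content of \cite{AABW2021}*{Lemma 3.2 and Corollary 3.3} specialised here, and the dyadic-rank decomposition together with the block comparison against $\Vert\cdot\Vert_{\ell_{r,p}}$ is precisely the mechanism behind the cited embedding. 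So the two are not different in spirit---yours is an elementary, explicit rendering of what the paper outsources---with the trade-off that your version is readable without consulting the reference, at the cost of a page of bookkeeping in place of two lines.
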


\begin{proof}
In agreement with the terminology in \cite{AABW2021}, $\ell_{r,p}=d_{1,p}(\ww)$, where $\ww$ is the weight with primitive sequence $(m^{1/r})_{m=1}^\infty$. Thus, the result follows from \cite{AABW2021}*{Corollary 9.13}.\end{proof}

\begin{lemma}[cf.\ \cite{DKK2003}*{Lemma 8.4}]\label{lem:BS}
Let $\XB$ be a basis of a Banach space $\XX$ such that $\kk_m[\XB,\XX]\lesssim m^{t}$ for some $0<t<1/2$. Suppose that $\XX^*$ is a GT space. Then $\XB$ is $p$-Hilbertian for all $p<1/t$.
\end{lemma}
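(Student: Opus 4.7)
My plan is to adapt the argument of \cite{DKK2003}*{Lemma 8.4}. First, via Theorem~\ref{thm:AABW} and the continuous inclusion $\ell_p\hookrightarrow \ell_{r,1}$ for $p<r$, it suffices to show that for each $r<1/t$ there is a constant $C_r$ such that
\[
\|\Ind_{\varepsilon,A}\|\le C_r m^{1/r},\quad |A|\le m,\ \varepsilon\in\EE_A.
\]
Such a bound yields $\ell_{r,1}$-domination of $\XB$; letting $r\nearrow 1/t$ then gives the conclusion.

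For fixed finite $A$ of cardinality $m$, the heart of the argument is to apply Grothendieck's theorem to the operator $T_A^*\colon \XX^*\to \ell_2^A$, $T_A^*(y^*)=(y^*(\xx_n))_{n\in A}$, the adjoint of $T_A\colon \ell_2^A\to\XX$, $T_A e_n=\xx_n$. Because $\XX^*$ is a GT space with constant $C_g$, we have $\pi_1(T_A^*)\le C_g\|T_A\|$, and Pietsch's theorem produces a probability measure $\mu_A$ on $B_{\XX^{**}}$ with
\[
\Bigl(\sum_{n\in A}|y^*(\xx_n)|^2\Bigr)^{1/2}\le C_g\|T_A\|\int_{B_{\XX^{**}}}|x^{**}(y^*)|\,d\mu_A(x^{**}),\quad y^*\in\XX^*.
\]
I would then test this inequality on the dual basis elements $y^*=\xx_n^*$ (so the left-hand side equals $1$) and on their signed partial sums $y^*=\sum_{n\in B}\varepsilon_n\xx_n^*$ with $B\subseteq A$. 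Using biorthogonality $|B|\le\|\Ind_{\varepsilon,B}\|\cdot \|\sum_{n\in B}\varepsilon_n\xx_n^*\|$, the unconditional projection bound $\|S_A^*y^*\|\le\kk_m\|y^*\|\lesssim m^t\|y^*\|$, and the GT factorization of $T_A^*$ through $L_1(\mu_A)$, the collection of Pietsch estimates should combine, after careful manipulation, to yield $\|\Ind_{\varepsilon,A}\|\lesssim m^{1/r}$ for each $r<1/t$.

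The main obstacle is extracting the sharp exponent $1/r$ arbitrarily close to $t$. A naive combination of the Pietsch inequality with the cotype-$2$ property of $\XX^*$ (a consequence of the GT hypothesis) and the sign-change estimate $\|\sum_n \varepsilon_n z_n^*\xx_n^*\|\lesssim \kk_m\|\sum_n z_n^*\xx_n^*\|$ gives only $\|\Ind_{\varepsilon,A}\|\lesssim m^{1/2+O(t)}$, corresponding to $p$-Hilbertianity for the strictly smaller range $p<2/(1+O(t))$. Closing the gap to $p<1/t$ requires the full strength of GT beyond cotype~$2$ — specifically, a bootstrap that feeds the successive improvements on $\|T_A\|$ back through the $L_1(\mu_A)$-factorization so that the $\kk_m\lesssim m^t$ dependence propagates sharply into the final exponent, as in the scheme of \cite{DKK2003}.
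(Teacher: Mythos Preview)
Your proposal correctly identifies the overall architecture---reduce via Theorem~\ref{thm:AABW} to an estimate $\|\Ind_{\varepsilon,A}\|\lesssim m^{1/r}$, exploit the GT hypothesis on $\XX^*$, and iterate---but it stops precisely where the real work begins. You concede in your final paragraph that the naive application of GT to the unweighted operator $T_A^*(y^*)=(y^*(\xx_n))_{n\in A}$ only yields an exponent of order $1/2+O(t)$, and you then defer to ``a bootstrap \dots\ as in the scheme of \cite{DKK2003}'' without carrying it out. That is the gap: the bootstrap is the proof, not an afterthought.

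Concretely, the paper's argument does \emph{not} use your operator $T_A^*$ or a Pietsch measure on $B_{\XX^{**}}$. The inductive step assumes $\XB$ is $p$-Hilbertian for some $1\le p<2$ and, for each $h^*\in\XX^*$, applies the absolutely-summing estimate $\pi_1\le C_g\|\cdot\|$ to the \emph{weighted} operator
\[
T_{h^*,A}\colon \XX^*\to\ell_2(A),\qquad f^*\mapsto \bigl(|h^*(\xx_n)|^{s-1}f^*(\xx_n)\bigr)_{n\in A},
\]
with $1/s=3/2-1/p$. The $p$-Hilbertian hypothesis controls $\|T_{h^*,A}\|$ via H\"older, and testing the $\pi_1$-inequality on the family $(h^*(\xx_n)\xx_n^*)_{n\in A}$ turns the right-hand side into $\sup_\varepsilon\|M_\varepsilon^*(h^*)\|\le C_t m^t\|h^*\|$. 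This yields $(\sum_{n\in A}|h^*(\xx_n)|^s)^{1/s}\lesssim m^t\|h^*\|$, which after dualization and H\"older gives $\|\Ind_{\varepsilon,A}\|\lesssim m^{1/p-1/2+t}$; Theorem~\ref{thm:AABW} then upgrades $p$ to any $q$ with $1/q>1/p-1/2+t$. Iterating drives $1/q$ down to any value above $t$. Your unweighted $T_A^*$ is the degenerate case $s=1$, which corresponds to already having $p=2$ and so cannot launch the iteration from $p=1$. Without the weight $|h^*(\xx_n)|^{s-1}$ and the specific test family, there is no mechanism to feed the current Hilbertian exponent back into the GT estimate, and the sharp range $p<1/t$ is unreachable.
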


\begin{proof}
For each $A\subseteq\NN$ finite and $\varepsilon=(\varepsilon_n)_{n\in A}\in\EE_A$, we consider the multiplier operator
\begin{equation*}
M_\varepsilon\colon\XX\to \XX,
\quad \sum_{n=1}^\infty a_n\, \xx_n \mapsto \sum_{n\in A}^\infty\varepsilon_n\, a_n \, \xx_n.
\end{equation*}
By \cite{AABW2021}*{Corollary 2.4}, there is a constant $C_t$ such that
\[
\Vert M_\varepsilon \Vert \le C_t |A|^t, \quad A\subseteq\NN, \, \varepsilon\in\EE_A.
\]
We will use a bootstrap argument: we will prove that if $t<u<1/2$ and $\XB$ is $p$-Hilbertian for some $1\le p<2$ then $\XB$ is $q$-Hilbertian, where $1/q=1/p-1/2+u$.

Let $C_p<\infty$ be the norm of the embedding of $\ell_p$ into $\XX$ via $\XB$. Pick $s\in (1, 2]$ so that $1/s=3/2-1/p$. Solving this equation gives $s=2p/(3p-2)$, $s-1=(2-p)/(3p-2)$, and $1-1/s=1/p-1/2$. Given $h^*\in\XX^*$ and $A\subseteq\NN$ finite we set $m=|A|$ and define
\[
T=T_{h^*,A} \colon \XX^* \to \ell_2(A),\quad
f^*\mapsto ( |h^*(\xx_n)|^{s-1} f^*(\xx_n) )_{n\in A}.
\]
Since $(p'/2)'=p/(2-p)$, H\"older's inequality gives
\begin{align*}
\Vert T(f^*)\Vert_2&=\left( \sum_{n\in A} |h^*(\xx_n)|^{2(s-1)} |f^*(\xx_n)|^2 \right)^{1/2}\\
&\le \left( \sum_{n\in A} |h^*(\xx_n)|^s \right)^{1-1/s} \left( \sum_{n\in A} |f^*(\xx_n)|^{p'} \right)^{1/p'}\\
&\le C_p\left( \sum_{n\in A} |h^*(\xx_n)|^s \right)^{1-1/s} \Vert f^*\Vert.
\end{align*}
Therefore if $C_g$ is the GT constant of $\XX^*$,
\begin{align}
\sum_{j\in B} \Vert T(f_j^*)\Vert \le C_p C_g \left( \sum_{n\in A} |h^*(\xx_n)|^s \right)^{1-1/s} \sup_{\varepsilon=\pm 1}
\left\Vert \sum_{j\in B} \varepsilon_j \, f_j^* \right\Vert,\label{uno}
\end{align}
for any finite family $\ff^*=(f_j^*)_{j\in B}$ in $\XX^*$. Choosing $\ff^*=(h^*(\xx_n) \xx_n^*)_{n\in A}$, we obtain
\begin{align}
\sum_{n\in A} |h^*(\xx_n)|^s &\le C_p C_g \left( \sum_{n\in A} |h^*(\xx_n)|^s \right)^{1-1/s} \sup_{\varepsilon\in\EE_A}
\Vert M_\varepsilon^*(h^*)\Vert\label{dos}\\
&\le C_p C_t C_g \left( \sum_{n\in A} |h^*(\xx_n)|^s \right)^{1-1/s}
m^{t} \Vert h^*\Vert\nonumber
\end{align}
for all $h^*\in\XX^*$, whence
\[
\left( \sum_{n\in A} |h^*(\xx_n)|^s \right)^{1/s}\le C_p C_t C_g m^{t} \Vert h^*\Vert.
\]
Let $\varepsilon=(\varepsilon_n)_{n\in A}\in\EE_A$. By the Hahn--Banach theorem and H\"older's inequality,
\begin{align*}
\Vert \Ind_{\varepsilon,A}[\XB,\XX]\Vert
&=\sup_{\Vert h^*\Vert \le 1} \left|\sum_{n\in A} \varepsilon_n \, h^*(\xx_n)\right|\\
&\le m^{1-1/s} \sup_{\Vert h^*\Vert \le 1} \left( \sum_{n\in A} |h^*(\xx_n)|^s \right)^{1/s}\\
&\le m^{1-1/s} C_p C_t C_g m^{t}= C_p C_t C_g m^{1/p-1/2+t}.
\end{align*}
Hence, by Theorem~\ref{thm:AABW}, the unit vector basis of $\ell_{r,1}$ dominates $\XB$, where $r$ is determined by $1/r=1/p-1/2+t$. We conclude by noticing that since $1/r<1/q$, the space $\ell_q$ continuously embeds into $\ell_{r,1}$.
\end{proof}

Now, we are in a position to prove the main result of this section.
\begin{theorem}\label{thm:NTQGGTDual} Let $\XB$ be a nearly unconditional basis of a Banach space. Suppose there is $0\le t<1/2$ such that the nearly truncation quasi-greedy function of $\XB$ satisfies $\lambda(a) \lesssim a^{-t}$ for $0<a<1$. If $\XX^*$ is a GT space, then $\XB$ is equivalent to the unit vector basis of $c_0$.
\end{theorem}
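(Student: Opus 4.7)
The plan is to chain together the machinery already developed: convert the hypothesis on $\lambda$ into a polynomial growth bound on the unconditionality constants, use Lemma~\ref{lem:BS} to extract a Hilbertian upper estimate, and then finish by Proposition~\ref{DKK2003Prop8.1}. There is essentially no novelty required; the theorem is arranged so that the pieces fit.

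First, since $\XX$ is a Banach space (so $p=1$ in Theorem~\ref{thm:NUCC}), the hypothesis $\lambda(a)\lesssim a^{-t}$ together with Theorem~\ref{thm:NUCC} yields
\[
\kk_m[\XB,\XX]\lesssim \lambda(m^{-1})\,\log m \lesssim m^{t}\log m, \qquad m\ge 2.
\]
Pick any $s\in(t,1/2)$. Absorbing the logarithm, we obtain $\kk_m[\XB,\XX]\lesssim m^{s}$ for all $m\ge 2$.

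Next, because $\XX^{\ast}$ is a GT space and $s<1/2$, Lemma~\ref{lem:BS} applies and shows that $\XB$ is $p$-Hilbertian for every $p<1/s$. Since $1/s>2$, we may in particular take $p=2$; thus $\XB$ is Hilbertian.

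Finally, a nearly unconditional basis is, a fortiori, thresholding bounded (take $A=A(a,f)$ in the defining inequality \eqref{nearlyuncdef}). Hence $\XB$ is a thresholding bounded Hilbertian basis of $\XX$ with $\XX^{\ast}$ a GT space, and Proposition~\ref{DKK2003Prop8.1} gives that $\XB$ is equivalent to the unit vector basis of $c_{0}$. There is no real obstacle; the only mild subtlety is making sure the logarithmic factor is absorbed into the polynomial bound, which is possible precisely because the hypothesis leaves room $t<1/2$ strictly.
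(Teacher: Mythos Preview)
Your proof is correct and follows exactly the approach of the paper: combine Theorem~\ref{thm:NUCC} with Lemma~\ref{lem:BS} to obtain that $\XB$ is Hilbertian, then conclude via Proposition~\ref{DKK2003Prop8.1}. The paper states this in one sentence; you have merely spelled out the details (absorbing the logarithm by passing from $t$ to some $s\in(t,1/2)$, and noting that nearly unconditional implies thresholding bounded), all of which are routine.
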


\begin{proof}
Combining Theorem~\ref{thm:NUCC} with Lemma~\ref{lem:BS} gives that $\XB$ is Hilbertian. Then the result follows from Proposition~\ref{DKK2003Prop8.1}.
\end{proof}
The following consequence of Theorem~\ref{thm:NTQGGTDual} improves \cite{AABBL2021}*{Theorem 3.11(ii)}, which reaches the same conclusion but under the stronger assumption that the basis is bidemocratic.

\begin{corollary} \label{thm:UBTOco}
Let $\XB$ be a truncation quasi-greedy basis of a Banach space $\XX$. If $\XX^{\ast}$ is a GT space, then $\XB$ is equivalent to the unit vector basis of $c_0$.
\end{corollary}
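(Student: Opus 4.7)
The plan is to derive the corollary as the special case $t=0$ of Theorem~\ref{thm:NTQGGTDual}, once we verify that a truncation quasi-greedy basis automatically satisfies that theorem's hypotheses.

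First I will check that $\XB$ is nearly truncation quasi-greedy with a uniformly bounded function $\lambda$. The key observation is that for any $f\in\Cu$ and any $a\in(0,1)$, the thresholding set $A(a,f)=\{n:|\xx_n^*(f)|\ge a\}$ is itself a greedy set of $f$: indeed, for $n\in A(a,f)$ and $k\notin A(a,f)$ we have $|\xx_n^*(f)|\ge a>|\xx_k^*(f)|$. Thus if $m=|A(a,f)|$, then $A(a,f)$ is a valid choice of $A_m(f)$, and consequently $\UU^{(a)}(f)=\UU_m(f)$ by construction of the two operators. Setting $C_t:=\sup_{m\in\NN}\|\UU_m\|<\infty$, this yields $\|\UU^{(a)}(f)\|\le C_t\|f\|$ uniformly in $a\in(0,1)$ and $f\in\Cu$. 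Hence $\lambda(a)\le C_t$ for every $a\in(0,1)$, and in particular $\lambda(a)\lesssim 1=a^{-0}$, so the growth hypothesis of Theorem~\ref{thm:NTQGGTDual} is satisfied with $t=0<1/2$.

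Second, with nearly truncation quasi-greediness now established, Theorem~\ref{thm:NUNtruncation quasi-greedy} gives that $\XB$ is also nearly unconditional, so the remaining hypothesis of Theorem~\ref{thm:NTQGGTDual} is met. Applying that theorem, together with the assumption that $\XX^*$ is a GT space, yields that $\XB$ is equivalent to the unit vector basis of $c_0$.

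There is essentially no serious obstacle: the corollary is a direct reading of Theorem~\ref{thm:NTQGGTDual}, and the substantive work has already been done in Section~\ref{sect:Ntruncation quasi-greedy} and in Lemma~\ref{lem:BS}. The only mild check is the identification of $A(a,f)$ as a greedy set of $f$, which is immediate from the definitions and which makes the passage from $\sup_m\|\UU_m\|<\infty$ to a uniform bound on $\lambda$ completely transparent.
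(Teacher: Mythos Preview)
Your proof is correct and follows exactly the paper's approach: the paper's own proof is the single line ``Just apply Theorem~\ref{thm:NTQGGTDual} with $t=0$,'' and your write-up simply spells out why a truncation quasi-greedy basis satisfies the hypotheses of that theorem (bounded $\lambda$, hence the $t=0$ growth condition, and nearly unconditionality via Theorem~\ref{thm:NUNtruncation quasi-greedy}). The added verification that $A(a,f)$ is a greedy set is a helpful clarification but introduces nothing new.
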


\begin{proof}
Just apply Theorem~\ref{thm:NTQGGTDual} with $t=0$.
\end{proof}

\section{Concluding remarks on the efficiency of the TGA relative to truncation quasi-greedy bases}\noindent

We finish our discussion on truncation quasi-greedy bases with some remarks on optimality. Our conclusion will be that, in spite of the fact that truncation quasi-greedy bases are a weaker form of quasi-greediness, the efficiency of the greedy algorithm for the former kind of bases is the same as the efficiency we would get for the latter in many important situations. To that end, and for the sake of self-reference, we recall that to measure the performance of the greedy algorithm we compare the error $\Vert f-\GG_{m}(f)\Vert$ in the approximation of any $f\in \XX$ by $\GG_{m}(f)$, with the \emph{best $m$-term approximation error}, given by
\[
\sigma_{m}[\XB, \XX](f):=\sigma_{m}(f) =\inf\{ \Vert f -g \Vert \colon g\in \Sigma_{m}\},
\]
where $\Sigma_m$ denotes the collection of all $f$ in $\XX$ which can be expressed as a linear combination of $m$ elements of $\XB$. An upper estimate for the rate $\Vert f- \GG_m(f)\Vert/\sigma_{m}(f)$ is usually called a \emph{Lebesgue-type inequality} for the TGA (see \cite{Temlyakov2015}*{Chapter 2}). Obtaining Lebesgue-type inequalities is tantamount to finding upper bounds for the \emph{Lebesgue constants} of the basis, given for $m\in \NN$ by
\[
\leb_m=\leb_m[\XB,\XX]=\sup\left\{\frac{\Vert f- \GG_{m}(f)\Vert}{\sigma_{m}(f)}\colon f\in \XX\setminus \Sigma_m\right\}.
\]
By definition, the basis $\XB$ is \emph{greedy} \cite{KoTe1999} if and only if
\[
\Ct_g=\Ct_g[\XB,\XX]:=\sup_m \leb_m<\infty.
\]
Garrig\'os et al.\ showed in \cite{GHO2013}*{Theorem 1.1} that if $\XB$ is a quasi-greedy basis in a Banach space $\XX$ then
\[
\leb_{m}\approx \max\{\dem_m, \unc_m\}, \quad m\in \NN,
\]
where $\dem_m$ is the $m$th \emph{democracy parameter} of the basis,
\[
\dem_m=\dem_m[\XB,\XX]=\sup\limits_{|A|=|B|\le m}\frac{\Vert \sum_{n\in A} \xx_n\Vert}{\Vert \sum_{n\in B} \xx_n\Vert}.
\]
These results have been extended to truncation quasi-greedy bases in quasi-Banach spaces in \cite{AAB2021}, which brought awareness to the importance of obtaining estimates for the democracy parameters for bases in special types of spaces. We find forerunners of this technique in the work of Wojtaszczyk, who proved in \cite{Woj2000}*{Theorem 3} that SUCC bases in a Hilbert space are democratic. More recently it has been shown in \cite{AAW2021} that if $\XB$ is a truncation quasi-greedy basis in $\ell_{p}$ or, more generally, of an $\SL_p$-space, $0<p\le 1$, then $\XB$ is democratic with fundamental function
\[
\varphi[\XB,\XX](m)\approx m^{1/p} , \quad m\in\NN.
\]
Combining this result with Corollary~\ref{thm:UBTOco} we get that if $\XB$ is a truncation quasi-greedy basis of an $\SL_p$-space $\XX$ for $p\in(0,1]\cup\{2,\infty\}$, then $\XB$ is democratic with fundamental function of the same order as $(m^{1/p})_{m=1}^\infty$. Hence,
\[
\leb_{m}[\XB, \XX]\approx \unc_{m}[\XB, \XX]\lesssim (\log m)^{1/p},\quad m\in \NN.
\]

We notice that this result does not hold for $p\in(1,2)\cup(2,\infty)$. Indeed, the canonical basis of $(\oplus_{n=1}^\infty \ell_2^n)_{\ell_p}$ is a non-democratic unconditional basis of a Banach space isomorphic to $\ell_p$.


\begin{bibdiv}
\begin{biblist}

\bib{AABBL2021}{article}{
author={Albiac, Fernando},
author={Ansorena, Jos\'{e}~L.},
author={Berasategui, Miguel},
author={Bern\'{a}, Pablo~M.},
author={Lassalle, Silvia},
title={Bidemocratic bases and their connections with other greedy-type bases},
date={2021},
journal={arXiv e-prints},
eprint={2105.15177},
}

\bib{AAB2021}{article}{
author={Albiac, Fernando},
author={Ansorena, Jos\'{e}~L.},
author={Bern\'{a}, Pablo~M.},
title={New parameters and {L}ebesgue-type estimates in greedy approximation},
date={2021},
journal={arXiv e-prints},
eprint={2104.10912},
}

\bib{AABW2021}{article}{
author={Albiac, Fernando},
author={Ansorena, Jos\'{e}~L.},
author={Bern\'{a}, Pablo~M.},
author={Wojtaszczyk, Przemys{\l}aw},
title={Greedy approximation for biorthogonal systems in quasi-banach
spaces},
date={2021},
journal={Dissertationes Math. (Rozprawy Mat.)},
volume={560},
pages={1\ndash 88},
}

\bib{AAGHR2015}{article}{
author={Albiac, Fernando},
author={Ansorena, Jos\'e~L.},
author={Garrig\'{o}s, Gustavo},
author={Hern\'{a}ndez, Eugenio},
author={Raja, Mat\'{\i}as},
title={Conditionality constants of quasi-greedy bases in super-reflexive {B}anach spaces},
date={2015},
ISSN={0039-3223},
journal={Studia Math.},
volume={227},
number={2},
pages={133\ndash 140},
url={https://doi-org/10.4064/sm227-2-3},
review={\MR{3397274}},
}

\bib{AAW2021b}{article}{
author={Albiac, Fernando},
author={Ansorena, Jos\'{e}~L.},
author={Wojtaszczyk, Przemys{\l}aw},
title={On certain subspaces of {$\ell_p$} for {$0<p\leq1$} and their
applications to conditional quasi-greedy bases in {$p$}-{B}anach spaces},
date={2021},
ISSN={0025-5831},
journal={Math. Ann.},
volume={379},
number={1-2},
pages={465\ndash 502},
url={https://doi-org/10.1007/s00208-020-02069-3},
review={\MR{4211094}},
}

\bib{AAW2021}{article}{
author={Albiac, Fernando},
author={Ansorena, Jos\'{e}~L.},
author={Wojtaszczyk, Przemys{\l}aw},
title={Quasi-greedy bases in {$\ell_ p$} {$(0<p<1)$} are democratic},
date={2021},
ISSN={0022-1236},
journal={J. Funct. Anal.},
volume={280},
number={7},
pages={108871, 21},
url={https://doi-org/10.1016/j.jfa.2020.108871},
review={\MR{4211033}},
}

\bib{AlbiacKalton2016}{book}{
author={Albiac, Fernando},
author={Kalton, Nigel~J.},
title={Topics in {B}anach space theory},
edition={Second},
series={Graduate Texts in Mathematics},
publisher={Springer, [Cham]},
date={2016},
volume={233},
ISBN={978-3-319-31555-3; 978-3-319-31557-7},
url={https://doi.org/10.1007/978-3-319-31557-7},
note={With a foreword by Gilles Godefroy},
review={\MR{3526021}},
}

\bib{BBG2017}{article}{
author={Bern\'{a}, Pablo~M.},
author={Blasco, \'{O}scar},
author={Garrig\'{o}s, Gustavo},
title={{L}ebesgue inequalities for the greedy algorithm in general
bases},
date={2017},
ISSN={1139-1138},
journal={Rev. Mat. Complut.},
volume={30},
number={2},
pages={369\ndash 392},
url={https://doi.org/10.1007/s13163-017-0221-x},
review={\MR{3642039}},
}

\bib{BDKOW2019}{article}{
author={Bern\'{a}, Pablo~M.},
author={Dilworth, Stephen~J.},
author={Kutzarova, Denka},
author={Oikhberg, Timur},
author={Wallis, Ben},
title={The weighted property ({A}) and the greedy algorithm},
date={2019},
ISSN={0021-9045},
journal={J. Approx. Theory},
volume={248},
pages={105300, 18},
url={https://doi.org/10.1016/j.jat.2019.105300},
review={\MR{4014313}},
}

\bib{DKK2003}{article}{
author={Dilworth, Stephen~J.},
author={Kalton, Nigel~J.},
author={Kutzarova, Denka},
title={On the existence of almost greedy bases in {B}anach spaces},
date={2003},
ISSN={0039-3223},
journal={Studia Math.},
volume={159},
number={1},
pages={67\ndash 101},
url={https://doi.org/10.4064/sm159-1-4},
note={Dedicated to Professor Aleksander Pe{\l}czy\'nski on the occasion
of his 70th birthday},
review={\MR{2030904}},
}

\bib{DKKT2003}{article}{
author={Dilworth, Stephen~J.},
author={Kalton, Nigel~J.},
author={Kutzarova, Denka},
author={Temlyakov, Vladimir~N.},
title={The thresholding greedy algorithm, greedy bases, and duality},
date={2003},
ISSN={0176-4276},
journal={Constr. Approx.},
volume={19},
number={4},
pages={575\ndash 597},
url={https://doi-org/10.1007/s00365-002-0525-y},
review={\MR{1998906}},
}

\bib{DSBT2012}{article}{
author={Dilworth, Stephen~J.},
author={Soto-Bajo, Mois\'es},
author={Temlyakov, Vladimir~N.},
title={Quasi-greedy bases and {L}ebesgue-type inequalities},
date={2012},
ISSN={0039-3223},
journal={Studia Math.},
volume={211},
number={1},
pages={41\ndash 69},
url={https://doi-org/10.4064/sm211-1-3},
review={\MR{2990558}},
}

\bib{Donoho1993}{article}{
author={Donoho, David~L.},
title={Unconditional bases are optimal bases for data compression and
for statistical estimation},
date={1993},
ISSN={1063-5203},
journal={Appl. Comput. Harmon. Anal.},
volume={1},
number={1},
pages={100\ndash 115},
url={https://doi.org/10.1006/acha.1993.1008},
review={\MR{1256530}},
}

\bib{Elton1978}{book}{
author={Elton, John~Hancock},
title={Weakly null normalized sequences in banach spaces},
publisher={ProQuest LLC, Ann Arbor, MI},
date={1978},
url={http://gateway.proquest.com/openurl?url_ver=Z39.88-2004&rft_val_fmt=info:ofi/fmt:kev:mtx:dissertation&res_dat=xri:pqdiss&rft_dat=xri:pqdiss:7915816},
note={Thesis (Ph.D.)--Yale University},
review={\MR{2628434}},
}

\bib{GHO2013}{article}{
author={Garrig\'os, Gustavo},
author={Hern\'{a}ndez, Eugenio},
author={Oikhberg, Timur},
title={{L}ebesgue-type inequalities for quasi-greedy bases},
date={2013},
ISSN={0176-4276},
journal={Constr. Approx.},
volume={38},
number={3},
pages={447\ndash 470},
url={https://doi-org/10.1007/s00365-013-9209-z},
review={\MR{3122278}},
}

\bib{KoTe1999}{article}{
author={Konyagin, Sergei~V.},
author={Temlyakov, Vladimir~N.},
title={A remark on greedy approximation in {B}anach spaces},
date={1999},
ISSN={1310-6236},
journal={East J. Approx.},
volume={5},
number={3},
pages={365\ndash 379},
review={\MR{1716087}},
}

\bib{LinPel1968}{article}{
author={Lindenstrauss, Joram},
author={Pe{\l}czy\'{n}ski, Aleksander},
title={Absolutely summing operators in {$L_{p}$}-spaces and their
applications},
date={1968},
ISSN={0039-3223},
journal={Studia Math.},
volume={29},
pages={275\ndash 326},
url={https://doi-org/10.4064/sm-29-3-275-326},
review={\MR{0231188}},
}

\bib{LinRos1969}{article}{
author={Lindenstrauss, Joram},
author={Rosenthal, Haskell~P.},
title={The {$\mathcal{L}_{p}$} spaces},
date={1969},
ISSN={0021-2172},
journal={Israel J. Math.},
volume={7},
pages={325\ndash 349},
url={https://doi-org/10.1007/BF02788865},
review={\MR{0270119}},
}

\bib{Pisier1986}{book}{
author={Pisier, Gilles},
title={Factorization of linear operators and geometry of {B}anach
spaces},
series={CBMS Regional Conference Series in Mathematics},
publisher={Published for the Conference Board of the Mathematical Sciences,
Washington, DC; by the American Mathematical Society, Providence, RI},
date={1986},
volume={60},
ISBN={0-8218-0710-2},
url={https://doi-org/10.1090/cbms/060},
review={\MR{829919}},
}

\bib{Temlyakov2015}{book}{
author={Temlyakov, Vladimir},
title={Sparse approximation with bases},
series={Advanced Courses in Mathematics. CRM Barcelona},
publisher={Birkh\"{a}user/Springer, Basel},
date={2015},
ISBN={978-3-0348-0889-7; 978-3-0348-0890-3},
note={Edited by Sergey Tikhonov},
review={\MR{3362509}},
}

\bib{Woj2000}{article}{
author={Wojtaszczyk, Przemys{\l}aw},
title={Greedy algorithm for general biorthogonal systems},
date={2000},
ISSN={0021-9045},
journal={J. Approx. Theory},
volume={107},
number={2},
pages={293\ndash 314},
url={https://doi-org/10.1006/jath.2000.3512},
review={\MR{1806955}},
}

\end{biblist}
\end{bibdiv}

\end{document}